\documentclass[10pt]{amsart}
\usepackage{a4wide}


\author{Manuel Bodirsky}
    \address{Laboratoire d'Informatique  (LIX), CNRS UMR 7161\\
    Ecole Polytechnique \\91128 Palaiseau\\
    France}
    \email{bodirsky@lix.polytechnique.fr}
    \urladdr{http://www.lix.polytechnique.fr/~bodirsky/}
\author{Michael Pinsker}
    \address{Laboratoire de Math\'{e}matiques Nicolas Oresme\\
    CNRS UMR 6139\\ Universit\'{e} de Caen\\14032 Caen Cedex\\
    France}
    \email{marula@gmx.at}
    \urladdr{http://dmg.tuwien.ac.at/pinsker/}
    \thanks{The second author is grateful for support through Erwin Schr\"{o}dinger Fellowship N2742-N18 of the Austrian Science Fund.}

\title{All reducts of the random graph are model-complete}
\subjclass[2000]{Primary 03C10; secondary 05C80; 08A35; 05C55;
03C40}

\keywords{random graph, reduct, automorphism, endomorphism,
model-completeness, existential positive definition, locally closed
monoid, closed permutation group}

\usepackage{cite}
\usepackage{amsmath, amsxtra, amsfonts, amssymb}
\usepackage{mathrsfs,amsthm,url}

\theoremstyle{plain}
    \newtheorem{thm}{Theorem}
    \newtheorem{theorem}[thm]{Theorem}
    \newtheorem{lem}[thm]{Lemma}
    
    \newtheorem{proposition}[thm]{Proposition}
    \newtheorem{cor}[thm]{Corollary}

\theoremstyle{definition}
    \newtheorem{defn}[thm]{Definition}
    \newtheorem{definition}[thm]{Definition}
    
\theoremstyle{remark}

\newcommand{\sw}{{\it sw}}
\newcommand{\nin}{\notin}
\providecommand{\Aut}{\mathop{\rm Aut}\nolimits}
\providecommand{\End}{\mathop{\rm End}\nolimits}

\newcommand{\Q}{{\mathscr Q}}
\newcommand{\U}{{\mathscr U}}
\newcommand{\C}{{\mathscr C}}
\newcommand{\R}{{\mathscr R}}
\newcommand{\F}{{\mathscr F}}

\newcommand{\A}{{\mathscr A}}
\newcommand{\B}{{\mathscr B}}
\renewcommand{\P}{{\mathscr P}}

\newcommand{\G}{{\mathscr G}}
\newcommand{\M}{{\mathscr M}}

\newcommand{\T}{{\mathscr T}}

\newcommand{\K}{{\mathscr K}}
\renewcommand{\H}{{\mathscr H}}

\renewcommand{\S}{{\mathscr S}}
\newcommand{\To}{\rightarrow}
\newcommand{\cal}[1]{\ensuremath{\mathcal{#1}}}

\begin{document}

\begin{abstract}
    We study locally closed transformation monoids which contain the
    automorphism group of the random graph. We show that such a
    transformation monoid is locally generated by the permutations in
    the monoid, or contains a constant operation, or contains an
    operation that maps the random graph injectively to an induced
    subgraph which is a clique or an independent set.

    As a corollary, our techniques yield a new proof of Simon Thomas'
    classification of the five closed supergroups of the automorphism
    group of the random graph; our proof uses different Ramsey-theoretic
    tools than the one given by Thomas, and is perhaps more
    straightforward.

    Since the monoids under consideration are endomorphism monoids of
    relational structures definable in the random graph, we are able to
    draw several model-theoretic corollaries: One consequence of our
    result is that all structures with a first-order definition in the
    random graph are model-complete. Moreover, we obtain a
    classification of these structures up to existential
    interdefinability.
\end{abstract}
\maketitle

\section{Introduction}\label{sect:prelims}
The random graph (also called the Rado graph) is the graph $G=(V;E)$
defined uniquely up to isomorphism by the property that for all
finite disjoint subsets $U,U'$ of the countably infinite vertex set
$V$ there exists a vertex $v \in V \setminus (U \cup U')$ such that
$v$ is in $G$ adjacent to all vertices in $U$ and to no vertex in
$U'$; we will refer to this property of the random graph as the
\emph{extension property}. For the many remarkable properties of
this graph and its automorphism group, and various connections to
many branches of mathematics, see
e.g.~\cite{RandomCameron,RandomRevisitedCameron}.

Simon Thomas has classified the five locally closed supergroups of
the automorphism group of $G$ in \cite{RandomReducts}. In this paper
we more generally investigate locally closed transformation monoids
that contain the automorphism group of $G$. We show that every such
monoid is either a disguised group in the sense that it is generated
by the largest permutation group which it contains, or it contains a
constant operation, or an injective operation which either deletes
all edges or all non-edges of the random graph. As a by-product of
our proof, we obtain a new proof of Thomas' classification.

Not surprisingly, insights on the behavior of functions on $G$ have
consequences for model-theoretic questions concerning the random
graph. Every closed supergroup of the automorphism group of $G$ is
the automorphism group of a relational structure definable in $G$;
such structures are called \emph{reducts} of $G$. Moreover, two
reducts $\Gamma_1,\Gamma_2$ have the same automorphism group iff
they are first-order interdefinable, i.e., iff every relation of
$\Gamma_1$ has a first-order definition in $\Gamma_2$, and
vice-versa. Thus, Thomas' theorem is the classification of the
reducts of $G$ up to first-order interdefinability. By considering
monoids of self-embeddings instead of automorphism groups, we obtain
a finer classification of these reducts, namely up to
\emph{existential} interdefinability, i.e., we do not distinguish
between two structures $\Gamma_1, \Gamma_2$ whenever every relation
of $\Gamma_1$ is definable in $\Gamma_2$ by an existential
first-order formula, and vice versa.

Another consequence of our results is that all reducts $\Gamma$ of
the random graph are \emph{model-complete}, i.e., all embeddings
between models of the first-order theory of $\Gamma$ preserve all
first-order formulas. The analogous statement for the reducts of
$(\mathbb Q;<)$, the dense linear order of the rationals, follows
from \cite[Proposition 8]{tcsps}. Model-completeness is a central
concept in model theory; see e.g.~\cite{Hodges}. For example,
model-completeness plays an important role when establishing
quantifier-elimination results. Whether or not a structure is
model-complete is usually not preserved by first-order
interdefinability. In this light, the result that all reducts of the
random graph are model-complete might be surprising.

The results presented are also relevant for the study of the
\emph{constraint satisfaction problem} for structures with a
first-order definition in the random graph. When $\Gamma$ is a
structure with a finite relational signature $\tau$, then the
constraint satisfaction problem for $\Gamma$ (denoted by
CSP$(\Gamma)$) is the computational problem of deciding whether a
given primitive positive sentence over $\tau$ is true in $\Gamma$. A
formula is called \emph{primitive positive} iff it is of the form
$\exists x_1,\dots,x_n. \psi_1 \wedge \dots \wedge \psi_m$ where
$\psi_1,\dots,\psi_m$ are atomic. The complexity of CSP$(\Gamma)$
does not change when $\Gamma$ is expanded by finitely many relations
with a primitive positive definition in $\Gamma$. Even though
expansions by relations with an existential positive definition
might increase the complexity of the constraint satisfaction
problem,  the classification of the reducts of $(\mathbb Q;<)$ up to
existential positive interdefinability was an important ingredient
in a recent complexity classification for the CSP of such
reducts~\cite{tcsps}. The results in this paper pave the way for a
similar classification for reducts of the random graph.

\section{Results}

We now present our main results, formulated in terms of
transformation monoids and permutation groups; the proofs of these
results will be of purely combinatorial nature. The model-theoretic
corollaries of the results presented  here will be drawn in
Section~\ref{sect:result_modeltheory}.

A monoid $\M$ of mappings from a set $D$ to $D$ is called
\emph{(locally) closed} iff the following holds: whenever $f: D
\rightarrow D$ is such that for every finite $A \subseteq D$ there
exists $e\in\M$ such that $e(x)=f(x)$ for all $x \in A$, then $f$ is
 an element of $\M$. Equivalently, the monoid is a closed set in
the product topology of $D^D$, where $D$ is taken to be discrete.
For the purposes of this paper, we call the smallest closed
transformation monoid that contains a set of operations $F$ from $V$
to $V$ and the automorphism group $\Aut(G)$ of the random graph the
 \emph{monoid generated by $F$}.

Similarly, a permutation group $\G$ acting on $D$ is called
(locally) closed iff it is closed in the subspace of $D^D$
consisting of all permutations on $D$; equivalently, $\G$ contains
all permutations which can be interpolated by elements of $\G$ on
arbitrary finite subsets of $D$, as in the definition of a closed
monoid above. As before, we call the smallest closed group
containing a set of permutations $F$ on $V$ as well as $\Aut(G)$ the
\emph{group generated by $F$}.

The random graph contains all countable graphs as induced subgraphs.
In particular, it contains an infinite complete subgraph, denoted by
$K_\omega$. It follows from the homogeneity of $G$ (see
Section~\ref{sect:result_modeltheory}) that all injective operations
from $V$ to $V$ whose image induces $K_\omega$ in $G$ locally
generate the same monoid. Let $e_E$ be one such injective operation
whose image induces $K_\omega$ in $G$. Similarly, $G$ contains an
infinite independent set, denoted by $I_\omega$. Let $e_N$ be an
injective operation from $V \rightarrow V$ whose image induces
$I_\omega$ in $G$.

Our main result is the following. It states that all closed monoids
containing $\Aut(G)$ either contain a quite primitive function, or
are generated by their permutations. As it turns out, the
permutations in such a monoid form a closed group.

\begin{theorem}\label{thm:endos}
For any closed monoid $\M$ containing $\Aut(G)$, one of the
following cases applies.
\begin{enumerate}
\item $\M$ contains a constant operation.
\item $\M$ contains $e_E$.
\item $\M$ contains $e_N$.
\item $\M$ is generated by (the closed group of) its permutations.
\end{enumerate}
\end{theorem}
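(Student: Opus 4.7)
The plan is to prove the contrapositive: assume $\M$ contains neither a constant operation, nor $e_E$, nor $e_N$, and show that case (4) holds. Let $\G$ denote the set of permutations of $V$ lying in $\M$. Since $\M$ is locally closed and closed under composition, $\G$ is a locally closed subgroup of $\mathrm{Sym}(V)$ containing $\Aut(G)$, and the closed monoid it generates is simply its pointwise closure $\overline{\G}$ in $V^V$. The inclusion $\overline{\G} \subseteq \M$ is immediate, so it suffices to show every $f \in \M$ lies in $\overline{\G}$: for every finite $A \subseteq V$, we must exhibit $\pi \in \G$ with $\pi|_A = f|_A$.

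The main technical tool would be a Ramsey-theoretic canonization lemma. Given $f \in \M$ and a finite $A \subseteq V$, I would produce $\tilde f \in \M$ that agrees with $f$ on $A$ and whose behavior on tuples of vertices in $V \setminus A$ depends only on the $G$-type of the tuple (over $A$). The construction proceeds by applying the structural Ramsey theorem for finite ordered graphs of Ne\v{s}et\v{r}il--R\"odl to iteratively canonize $f$ on pairs, triples, and longer tuples; after each extraction one transfers back to all of $V$ via a self-embedding of $G$ living in $\overline{\Aut(G)} \subseteq \M$, and one concludes by a diagonal compactness argument inside the closed monoid $\M$.

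The pair-behavior of a canonical $\tilde f$ on $V \setminus A$ falls into one of five regimes: (i) $\tilde f$ is constant; (ii) $\tilde f$ is injective with image a clique; (iii) $\tilde f$ is injective with image an independent set; (iv) $\tilde f$ preserves edges and non-edges (a self-embedding); (v) $\tilde f$ is an injective ``switch'' swapping edges and non-edges. Regime (i) uses connectedness of $G$ and of its complement, together with iteration $\tilde f \circ \tilde f \circ \dotsb$ and composition with $\Aut(G)$, to force a genuine constant operation into $\M$, contradicting the hypothesis. Regimes (ii) and (iii), after extending $\tilde f$'s behavior to all of $V$ via a self-embedding of $G$, give $e_E$ respectively $e_N$ in $\M$, again contradicting the hypothesis. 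In regime (iv), $\tilde f$ is itself a self-embedding of $G$, hence lies in $\overline{\Aut(G)} \subseteq \overline{\G}$ and agrees with $f$ on $A$, delivering exactly the required interpolating permutation.

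The remaining case (v), together with the ``local complementation'' behaviors that become visible only upon canonizing on triples (these correspond to the non-trivial supergroups in Thomas' classification), is the heart of the proof. My plan is to show that each such higher-order canonical behavior in $\M$ can be upgraded to an honest permutation of $V$ inside $\G$: for case (v), composing $\tilde f$ with itself yields a canonical self-embedding in $\overline{\Aut(G)}$, and combining shifted copies of $\tilde f$ yields additional canonical maps; a compactness argument together with the homogeneity of $G$ (equivalently of its complement) should produce an anti-automorphism permutation $\sigma \in \M$ agreeing with $\tilde f$ on $A$. The main obstacle is controlling these iterated compositions so that no intermediate collapsing behavior arises, since any such collapse would, by the earlier analysis, contradict the standing hypothesis by forcing $\M$ to contain a constant, $e_E$, or $e_N$; careful bookkeeping of how partial permutations, partial collapses, and partial clique or independent-set image maps can be combined within $\M$ is exactly what makes the argument work and simultaneously recovers Thomas' classification.
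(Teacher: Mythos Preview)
Your Ramsey-canonization framework is the right engine, and it is what the paper uses, but your organization differs from the paper's in a way that leaves a real gap. The paper does \emph{not} try, for given $f\in\M$ and finite $A$, to directly produce $\pi\in\G$ with $\pi|_A=f|_A$. Instead it works in two stages: first (Theorem~\ref{thm:endos-weak}) it shows that any $e\in\M$ not generated by $\Aut(G)$ already generates one of a constant, $e_E$, $e_N$, $-$, or $\sw$, so under your standing hypothesis the permutation $-$ or $\sw$ lands in $\G$ with no reference to any particular $A$; then (Propositions~\ref{prop:above-minus}--\ref{prop:to-all-permutations}) it climbs the Thomas lattice by choosing a relation $R$ preserved by the current $\G$ but violated by some $e\in\M$, canonizing $e$ over a witnessing tuple $(a_1,\dots,a_n)$, and deriving either $e_E$/$e_N$ (contradiction) or the next permutation into $\G$. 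The already-extracted permutations $-$ and $\sw$ are used throughout to \emph{repair} the canonized map's behaviour on edges crossing between the constants and the rest.

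Your cases (iv) and (v) are where this bites. In (iv) you claim that if $\tilde f$ preserves edges and non-edges on $V\setminus A$ then $\tilde f\in\overline{\Aut(G)}$; but even when $\tilde f$ is canonical over $A$, it may flip, add, or delete edges between elements of $A$ and the various type-classes of $V\setminus A$, and it may alter pairs inside $A$, so it need not be a self-embedding of $G$ at all and you have no $\pi\in\G$ agreeing with $f$ on $A$. In (v), composing $\tilde f$ with itself yields something identity-like on $V\setminus A$ but no longer agreeing with $f$ on $A$, so it does not give the $\pi$ you need; your promised ``compactness argument'' producing an anti-automorphism $\sigma\in\M$ with $\sigma|_A=\tilde f|_A$ is exactly the hard step, and nothing in the proposal indicates how to carry it out. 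The paper's detour through a violated relation $R$ is precisely what makes the crossing behaviour controllable (using $-$ and $\sw$ once they are known to lie in $\G$) and reduces everything to a map that changes a single edge among the $a_i$, forcing $e_N$ or $e_E$ and closing the argument by contradiction.
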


The last case splits into five sub-cases, corresponding to the five
locally closed permutation groups that contain $\Aut(G)$. These
groups have already been exhibited by Thomas~\cite{RandomReducts}.
In our proof of Theorem~\ref{thm:endos}, we will be forced to
re-derive this result. While our proof of that classification, being
the proof of a more general result, is longer than the one in
\cite{RandomReducts}, it might be more canonical (in the sense of
Definition~\ref{def:canonical}). We now define the five groups.

It is clear that the complement graph of $G$ is isomorphic to $G$.
Note that by the homogeneity of $G$ any isomorphism between $G$ and
its complement locally generates the same transformation monoid
(group). Let $-$ be one such isomorphism.

For any finite subset $S$ of $V$, if we flip edges and non-edges
between $S$ and $V \setminus S$ in $G$, then the resulting graph is
isomorphic to $G$ (it is straightforward to verify the extension
property). Let $i_S$ be such an isomorphism for each non-empty
finite $S$. Every such operation generates the same transformation
monoid (group). We also write \sw\ for $i_{\{0\}}$, where $0 \in V$
is a fixed element for the rest of the paper, and refer to this
operation as the \emph{switch}.

\begin{theorem}[of~\cite{RandomReducts}]\label{thm:reducts}
Let $\G$ be a closed permutation group containing $\Aut(G)$. Then
exactly one out of the following five cases is true.
\begin{enumerate}
\item $\G$ equals $\Aut(G)$.
\item $\G$ is the group generated by $-$.
\item $\G$ is the group generated by $\sw$.
\item $\G$ is the group generated by $\{-,\sw\}$.
\item $\G$ is the group of all
permutations on $V$.
\end{enumerate}
\end{theorem}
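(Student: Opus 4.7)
The plan is to classify closed permutation groups $\G$ with $\Aut(G)\subseteq\G$ by means of a Ramsey-theoretic canonization applied to a chosen non-automorphism element. If $\G=\Aut(G)$ we are in case (1), so assume $\G\supsetneq\Aut(G)$ and fix $g\in\G\setminus\Aut(G)$. One could try to derive the result from Theorem~\ref{thm:endos} by viewing $\G$ as a monoid, but constants and the maps $e_E,e_N$ are not permutations, so this only tells us that case~(4) of Theorem~\ref{thm:endos} holds for $\G$ (trivially, since $\G$ is already a group); the interesting work happens inside case~(4), which is exactly what Theorem~\ref{thm:reducts} classifies.

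The main tool is the Ramsey property of the class of finite linearly ordered graphs (Ne\v{s}et\v{r}il-R\"odl), which makes the ordered random graph $(V;E,<)$, for $<$ a generic linear order, a Ramsey homogeneous structure. I would apply it to the coloring of ordered pairs $(x,y)$ with $x<y$ by the edge-type of $(g(x),g(y))$ in $G$. A standard Ramsey-plus-compactness argument then produces, in the topological closure of $\{\alpha\circ g\circ\beta: \alpha,\beta\in\Aut(G)\}$, a permutation $g'$ that is \emph{canonical} in the sense that the edge-type of $(g'(x),g'(y))$ depends only on the $2$-type of $(x,y)$ in $(V;E,<)$. Since $\G$ is a closed group and the approximating maps are permutations in $\G$, once one checks that the canonical limit remains a permutation one obtains $g'\in\G$.

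A canonical permutation of the ordered random graph is determined, up to composition with elements of $\Aut(G)$, by its action on the finitely many $2$-types. Enumerating these possibilities, the non-identity behaviors consistent with being an edge-preserving permutation of $G$ correspond to: the global complement $-$, the switch $\sw$, or their composition $-\circ\sw$. (The switch appears only when the Ramsey argument is run in the expansion of $G$ by a constant naming the pivot vertex $0$, so the canonization must be carried out in that expansion as well.) Any other canonical behavior is inconsistent with edge-preservation at the level of finite configurations, and I would show via iterated application of $g'$, together with the extension property of $G$, that the closed group generated contains, for every finite $A\subseteq V$ and every permutation $\sigma$ of $A$, an element agreeing with $\sigma$ on $A$; by closure, $\G=\mathrm{Sym}(V)$.

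Thus every $g\in\G\setminus\Aut(G)$ gives rise, via canonization, to an element of $\G$ of one of the four non-trivial types above, placing $\G$ inside one of the groups (2)--(5). Pairwise distinctness of the five groups is routine, witnessed in each strict containment by a specific permutation. The main obstacle is the canonization step itself: setting up the correct Ramsey expansion (including the naming of a pivot vertex) to capture $\sw$ alongside $-$, handling the delicate point that pointwise limits of permutations need not be permutations (which forces a group-theoretic variant of the canonization argument), and verifying that any canonical behavior outside the four listed types already generates $\mathrm{Sym}(V)$ via the extension property.
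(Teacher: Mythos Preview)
Your overall strategy---canonize an element $g\in\G\setminus\Aut(G)$ via Ramsey theory and then enumerate the finitely many canonical behaviors---is the paper's strategy too, but the implementation differs in two substantial ways. First, the paper never produces a single global canonical function via compactness: it works finitarily, showing that $g$ is canonical on arbitrarily large finite $n$-constant subgraphs (Lemma~\ref{lem:n-constant-interpolation}), and the constants it names are the specific vertices where $g$ violates $E$ or $N$ (or, in the later Propositions, the entries of a tuple witnessing that $g$ violates a relation $R$), so that the canonical behavior is forced to be nontrivial at those constants; naming a generic pivot $0$ as you suggest does not guarantee this. Second, the paper does not attack the group classification head-on: it first proves the monoid-level results Theorem~\ref{thm:endos-weak} and Propositions~\ref{prop:above-minus}--\ref{prop:to-all-permutations} about endomorphism monoids of reducts, climbing the lattice one inclusion at a time, and only in the final paragraph reads off Theorem~\ref{thm:reducts} as a corollary. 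The linear order plays no role in the paper's arguments beyond the underlying Ne\v{s}et\v{r}il--R\"odl theorem; the expansion that does the work is by constants (equivalently, by the partition into type-classes over those constants), not by $<$.

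The obstacle you flag is a genuine gap in your sketch. The canonical limit $g'$ of a permutation $g$ may perfectly well behave like $e_N$ (or $e_E$); this is a consistent injective canonical behavior, and no finite-configuration argument about ``being a permutation'' excludes it. What actually disposes of this case is the group structure of $\G$: if for every finite $A$ some $h_A\in\G$ maps $A$ to an independent set, then $h_A^{-1}\in\G$ maps an independent set onto $A$, and composing two such maps through independent sets (using homogeneity of $G$ to match them up) shows that $\G$ realizes every finite partial bijection of $V$, whence $\G=\mathrm{Sym}(V)$. Your phrase ``iterated application of $g'$'' does not capture this, since $g'\notin\G$ and iterating a map that only deletes edges never adds any back; the missing ingredient is the explicit use of inverses in $\G$.
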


The arguments given in~\cite{RandomReducts} use a Ramsey-theoretic
result by Ne\v{s}et\v{r}il~\cite{NesetrilNocliques}, namely that the
class of all finite graphs excluding finite cliques of a fixed size
forms a \emph{Ramsey class} (in the sense of~\cite{NesetrilSurvey}).
We also use a Ramsey-theoretic result, shown by R\"odl and
Ne\v{s}et\v{r}il~\cite{NesetrilRoedlPartite,NesetrilRoedlOrderedStructures}
(and independently by~\cite{AbramsonHarrington}), which is
different: we need the fact that finite ordered vertex-colored
graphs form a Ramsey class. We believe that our approach is
canonical, and that the proof techniques could very well be adapted
to show similar classifications for supergroups of automorphism
groups of other infinite structures $\Gamma$ which have the property
that the class of all finite structures that embed into $\Gamma$
(possibly equipped with a linear order on the vertices) is a Ramsey
class.

\section{Model-theoretic corollaries}\label{sect:result_modeltheory}

We now discuss the results of the preceding section in a
model-theoretic setting and establish some corollaries in this
language.

One easily verifies that the endomorphism monoid $\End(\Delta)$
(automorphism group $\Aut(\Delta)$) of a structure $\Delta$ with
domain $D$ is a closed monoid (group) on $D$, and that every closed
monoid (group) is of this form for an adequate structure $\Delta$
(confer also \cite[Corollary~1.9]{Szendrei} for these concepts).
Moreover, the automorphism group of a \emph{reduct} $\Gamma$ of a
structure $\Delta$, i.e., of a structure $\Gamma$ which is
first-order definable in $\Delta$, clearly contains $\Aut(\Delta)$.
The following is Theorem~\ref{thm:endos}, restated in terms of
structures.

\begin{theorem}
Let $\Gamma$ be first-order definable in the random graph. Then one
of the following cases applies.
\begin{enumerate}
\item $\Gamma$ has a constant endomorphism.
\item $\Gamma$ has the endomorphism $e_E$.
\item $\Gamma$ has the endomorphism $e_N$.
\item $\End(\Gamma)$ is generated by $\Aut(\Gamma)$.
\end{enumerate}
\end{theorem}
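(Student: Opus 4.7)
The plan is to reduce an arbitrary $f\in\M$ to a highly structured \emph{canonical} function by exploiting the Ramsey property of finite ordered vertex-coloured graphs, and then to analyse a short finite list of canonical shapes.

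First I would fix a linear order $<$ on $V$ and call $g:V\to V$ canonical if the 2-type of $(g(x),g(y))$ in $(G;<)$ depends only on the 2-type of $(x,y)$. Here the 2-types record whether $xy$ is an edge, a non-edge, or a diagonal pair, together with relative order. Since there are only finitely many 2-types in $(G;<)$, there are only finitely many canonical behaviours, which is what the case analysis will exploit.

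Second, given $f\in\M$, I would colour the pairs of an ordered copy of $G$ by the 2-type of their image under $f$ and apply the Ne\v{s}et\v{r}il--R\"odl theorem for ordered vertex-coloured graphs to extract arbitrarily large canonical finite substructures. A compactness argument in $V^V$, combined with the homogeneity of $G$ (so that any ordered copy of $G$ inside $V$ can be mapped onto $V$ by an element of $\Aut(G)$), then produces a fully canonical $g$ in the closure of $\Aut(G)\cdot f \cdot \Aut(G)$; since $\M$ is closed and contains $\Aut(G)$, this $g$ lies in $\M$.

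Third, I would classify the canonical functions. If some 2-type is mapped to the diagonal, then iterating $g$ shrinks the image until a constant endomorphism is reached (case (1)). If the edge type maps to the edge type but the non-edge type is collapsed to an edge, the image of $g$ is a clique and one shows $g$ generates $e_E$ (case (2)); symmetrically one obtains $e_N$ (case (3)). In the remaining cases $g$ is injective and preserves the partition into edges and non-edges in one of only four admissible patterns --- identity, complementation $-$, switch $\sw$, or $-\circ\sw$ --- and hence is a permutation; this shows that the permutations of $\M$ already suffice to generate $g$, which is enough because every $f$ reduces to some such $g$ (case (4)).

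The main obstacle is the last step: one has to exclude exotic canonical shapes in the non-collapsing case, and thereby rederive Thomas's classification (Theorem~\ref{thm:reducts}) inside our proof. The delicate point is that a priori a canonical injection might treat edges and non-edges "independently", and eliminating such hybrid possibilities requires a second round of Ramsey-theoretic canonicalisation, combined with a careful use of the extension property of $G$ to show that every allowed pattern is, up to $\Aut(G)$, a composition of $-$ and $\sw$. Carrying this out uniformly with the endomorphism case, rather than just the permutation case treated in \cite{RandomReducts}, is where the proof becomes most intricate.
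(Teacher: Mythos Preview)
Your reduction runs in the wrong direction. From $f\in\M$ you produce a canonical $g$ in the closure of $\Aut(G)\cdot f\cdot\Aut(G)\subseteq\M$; if $g$ turns out to be a permutation, this tells you that $\M$ contains that permutation, but it says nothing about whether $f$ itself lies in the closed monoid generated by the permutations of $\M$. Concretely, take $f$ injective, with $E(a,b)$ and $N(f(a),f(b))$ for one fixed pair, but preserving $E$ and $N$ on arbitrarily large finite subgraphs avoiding that pair: your global canonicalisation (this is exactly Proposition~\ref{prop:interpolation}) returns the identity behaviour, and you have learned nothing. The statement in case~(4) is that \emph{every} $f\in\M$ is generated by permutations, and that cannot be read off from a canonical $g$ obtained by forgetting where $f$ misbehaves. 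A related symptom is that $\sw$ and $-\circ\sw$ are not canonical over $(G;<)$ in your sense at all, since they single out the vertex $0$; they only become canonical once a constant is added to the language.

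This is not a detail to be patched in the ``second round'' you allude to; it is the core mechanism. The paper does not canonicalise globally. For each $f$ not yet accounted for it fixes a tuple $a=(a_1,\dots,a_n)$ witnessing that $f$ violates some relation $R$ preserved by the current candidate group, and then canonicalises $f$ \emph{relative to the constants} $a_1,\dots,a_n$ via an $n$-constant graph Ramsey lemma (Lemma~\ref{lem:constantGraphRamseyLemma}). The resulting function is canonical on and between the $2^n$ type-classes over $\{a_1,\dots,a_n\}$ while still sending $a$ outside $R$, so it cannot lie in the group one is testing; a finite case analysis on its behaviour between the type-classes then forces it to generate $e_E$, $e_N$, a constant, or the next permutation up ($-$ or $\sw$), and one iterates through Propositions~\ref{prop:above-minus}--\ref{prop:to-all-permutations}. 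Without pinning down constants, your canonicalisation discards precisely the information that separates case~(4) from cases (1)--(3).
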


For automorphism groups of reducts of the random graph, we have even
more. It is well-known that the random graph is \emph{(ultra-)
homogeneous}, i.e., every isomorphism between two finite induced
substructures of $G$ can be extended to an automorphism of $G$ (see
\cite[Theorem 6.4.4]{Hodges}). For relational structures with a
finite signature, homogeneity implies \emph{$\omega$-categoricity}:
all countable models of the first-order theory of $G$ are isomorphic
(Corollary~6.4.2 of \cite{Hodges}). Reducts of $\omega$-categorical
structures are $\omega$-categorical (see e.g.
\cite[Theorem~6.3.6]{Hodges}).

 Now, the theorem of Engeler,
Ryll-Nardzewski, and Svenonius (see
e.g.~\cite[Theorem~6.3.1]{Hodges}) states that a relation $R$ is
first-order definable in an $\omega$-categorical structure $\Delta$
 if and only if $R$ is preserved by all automorphisms of $\Delta$. As a consequence, the reducts of an
$\omega$-categorical structure $\Delta$ are, up to first-order
interdefinability, in one-to-one correspondence with the locally
closed permutation groups containing $\Aut(\Delta)$. To illustrate
this, we restate Theorem~\ref{thm:reducts} by means of this
connection.

On the random graph, let $R^{(k)}$ be the $k$-ary relation that
holds on $x_1,\dots,x_k \in V$ if $x_1,\dots,x_k$ are pairwise
distinct, and the number of edges between these $k$ vertices is odd.
Note that $R^{(4)}$ is preserved by $-$, $R^{(3)}$ is preserved by
$\sw$, and that $R^{(5)}$ is preserved by $-$ and by $\sw$, but not
by all permutations of $V$.

\begin{theorem}[of~\cite{RandomReducts}]\label{thm:reducts2}
Let $\Gamma$ be a structure with a first-order definition in the
random graph $(V;E)$. Then exactly one out of the following five
cases is true.
\begin{enumerate}
\item $\Gamma$ is first-order interdefinable with $(V;E)$.
\item $\Gamma$ is first-order interdefinable with $(V;R^{(4)})$.
\item $\Gamma$ is first-order interdefinable with $(V;R^{(3)})$.
\item $\Gamma$ is first-order interdefinable with $(V;R^{(3)},R^{(4)})$.
\item $\Gamma$ is first-order interdefinable with $(V;=)$.
\end{enumerate}
\end{theorem}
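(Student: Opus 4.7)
The plan is to derive Theorem~\ref{thm:reducts2} directly from Theorem~\ref{thm:reducts} via the Galois correspondence provided by the theorem of Engeler, Ryll-Nardzewski, and Svenonius, which the paragraph preceding the statement already recalls. First I would note that $G$, being homogeneous in a finite relational language, is $\omega$-categorical, and so is every reduct. Hence two reducts of $G$ are first-order interdefinable iff they have the same automorphism group, and the reducts up to first-order interdefinability correspond bijectively to the closed permutation groups containing $\Aut(G)$. By Theorem~\ref{thm:reducts}, there are exactly five such groups.

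It then suffices to match each of the five groups with one of the five structures in the statement. Cases (1) and (5) are immediate: $\Aut(V;E)=\Aut(G)$ by definition and $\Aut(V;=)$ is the full symmetric group on $V$. For the remaining three cases I would verify the preservation claims made just before the theorem. Complementation replaces a set of $k$ edges on a $4$-element vertex set by $6-k$ edges, preserving parity, so $-$ preserves $R^{(4)}$. A switch $i_S$ flips exactly those edges of a triangle $\{x_1,x_2,x_3\}$ that have exactly one endpoint in $S$, namely $k(3-k)$ edges with $k=|S\cap\{x_1,x_2,x_3\}|$, always an even number; hence $\sw$ preserves $R^{(3)}$. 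The same arithmetic on $5$-sets (using $10-k\equiv k\pmod 2$ and $k(5-k)\in 2\mathbb{Z}$) shows that both $-$ and $\sw$ preserve $R^{(5)}$.

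For each of these three structures $\Gamma$, $\Aut(\Gamma)$ is therefore a closed supergroup of $\Aut(G)$ that already contains the advertised generators, so by Theorem~\ref{thm:reducts} it must be the group generated by those generators. To see that the five matches are pairwise distinct, I would exhibit the obvious small witnesses: a triangle with exactly one edge shows that $-$ does not preserve $R^{(3)}$; a $4$-set containing $0$ and a single edge incident to $0$ shows that $\sw$ does not preserve $R^{(4)}$; and a pair of $5$-sets with $1$ and $2$ edges respectively shows that $R^{(5)}$ is not preserved by all permutations of $V$. All these configurations are realized in $G$ by the extension property. The main obstacle is just the finite bookkeeping of these separations; no monoid-level analysis or Ramsey-theoretic input beyond Theorem~\ref{thm:reducts} itself is required.
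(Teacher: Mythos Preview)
The paper does not actually prove this theorem: immediately after the statement it says ``We will not prove this relational description in this paper; however, given Theorem~\ref{thm:reducts} and the discussion above, verifying the equivalence is merely an exercise.'' Your proposal carries out precisely that exercise, via the Ryll--Nardzewski correspondence together with the preservation and separation checks the paper alludes to, so your approach coincides with what the paper intends.

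One substantive point deserves attention. For case~(4) you argue about $(V;R^{(5)})$, following the paragraph before the theorem, whereas the printed statement lists $(V;R^{(3)},R^{(4)})$. These are not the same reduct. By your own separations, $-$ fails to preserve $R^{(3)}$ and $\sw$ fails to preserve $R^{(4)}$; hence $\Aut(V;R^{(3)},R^{(4)})=\Aut(V;R^{(3)})\cap\Aut(V;R^{(4)})$ contains neither $-$ nor $\sw$, and by Theorem~\ref{thm:reducts} it must equal $\Aut(G)$. Thus $(V;R^{(3)},R^{(4)})$ is first-order interdefinable with $(V;E)$, not with the reduct whose automorphism group is generated by $\{-,\sw\}$. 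In other words, case~(4) as printed is a slip for $(V;R^{(5)})$, and your argument establishes the intended (correct) version rather than the displayed one. You should flag this explicitly rather than silently switching to $R^{(5)}$.

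A minor expository remark: the sentence ``so by Theorem~\ref{thm:reducts} it must be the group generated by those generators'' is premature; containing the generators only gives a lower bound on $\Aut(\Gamma)$. The equality is secured only once you supply the separation witnesses in the following paragraph, so the two paragraphs should be presented as a single argument.
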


For any reduct $\Gamma$, a case of Theorem~\ref{thm:reducts2}
applies iff the case with the same number applies for $\Aut(\Gamma)$
in Theorem~\ref{thm:reducts}. We will not prove this relational
description in this paper; however, given Theorem~\ref{thm:reducts}
and the discussion above, verifying the equivalence is merely an
exercise.

In the same way as automorphisms can be used to characterize
first-order definability, self-embeddings can be used to
characterize existential definability, and endomorphisms can be used
to characterize existential positive definability in
$\omega$-categorical structures. This is the content of the
following theorem. We say that a first-order formula is
\emph{existential} iff it is of the form $\exists x_1,...,x_n.
\psi$, where $\psi$ is quantifier-free, and \emph{existential
positive} iff it is existential and positive, i.e., in addition it
does not contain any negations.

\begin{theorem}\label{thm:pres}
A relation $R$ has an existential positive (existential) definition
in an $\omega$-categorical structure $\Gamma$ if and only if $R$ is
preserved by the endomorphisms (self-embeddings) of $\Gamma$.
\end{theorem}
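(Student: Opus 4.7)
The forward direction is immediate from the syntactic form of the formulas: existential positive formulas are preserved by all homomorphisms, and existential formulas by all embeddings. The content of the theorem lies in the converse. I will sketch the existential positive case; the existential case runs on the same template, with existential types and self-embeddings replacing existential positive types and endomorphisms throughout, using the fact that embeddings preserve atoms together with their negations.

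For $\bar b \in \Gamma^n$, let $\operatorname{ept}(\bar b)$ denote the set of existential positive formulas $\phi(\bar x)$ with $\Gamma \models \phi(\bar b)$. The core lemma is: if $\operatorname{ept}(\bar a) \subseteq \operatorname{ept}(\bar b)$, then some $h \in \End(\Gamma)$ sends $\bar a$ to $\bar b$. Granting this lemma, the theorem follows quickly. By $\omega$-categoricity, both $R$ and $\Gamma^n \setminus R$ split into finitely many $\Aut(\Gamma)$-orbits; fix orbit representatives $\bar a_1, \ldots, \bar a_r$ of $R$ and $\bar b_1, \ldots, \bar b_s$ of its complement. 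If $\operatorname{ept}(\bar a_i) \subseteq \operatorname{ept}(\bar b_j)$ for some pair, the lemma would produce $h \in \End(\Gamma)$ with $h(\bar a_i) = \bar b_j$, forcing $\bar b_j \in R$ — a contradiction. Hence for every pair $(i,j)$ there is an existential positive formula $\phi_{i,j}(\bar x)$ with $\Gamma \models \phi_{i,j}(\bar a_i)$ and $\Gamma \not\models \phi_{i,j}(\bar b_j)$. Since existential positive formulas are preserved by automorphisms and the $\bar a_i, \bar b_j$ are orbit representatives, the finite formula $\bigvee_i \bigwedge_j \phi_{i,j}(\bar x)$ defines $R$.

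The only step requiring real work is the lemma, which I would prove by a back-and-forth construction that exploits $\omega$-categoricity. Enumerate $V = \{v_1, v_2, \ldots\}$ and inductively construct $w_1, w_2, \ldots$ so that $\operatorname{ept}(\bar a, v_1, \ldots, v_k) \subseteq \operatorname{ept}(\bar b, w_1, \ldots, w_k)$ holds at every stage; the limit map is then the required endomorphism. For the inductive step, suppose no admissible choice of $w_{k+1}$ exists. By $\omega$-categoricity, the tuples $(\bar b, w_1, \ldots, w_k, v)$ as $v$ ranges over $V$ realize only finitely many distinct ept's, so failure of admissibility is already witnessed by finitely many existential positive formulas $\phi_1, \ldots, \phi_m$, each satisfied by $(\bar a, v_1, \ldots, v_k, v_{k+1})$, but no single $v \in V$ satisfying all $\phi_i$ in the last coordinate of $(\bar b, w_1, \ldots, w_k, v)$. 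Then $\exists z.\, \bigwedge_i \phi_i$ belongs to $\operatorname{ept}(\bar a, v_1, \ldots, v_k)$, so by the inductive hypothesis to $\operatorname{ept}(\bar b, w_1, \ldots, w_k)$, yielding a witness $w_{k+1}$ that contradicts the assumed failure. The chief subtlety is ensuring that the number of "bad" formulas to be ruled out at each step is finite — this is exactly what $\omega$-categoricity buys us via the finiteness of the orbit space on tuples of each fixed length.
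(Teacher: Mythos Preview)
Your proof is correct, but the route is genuinely different from the paper's. The paper does not build the endomorphism by hand: instead it first uses Ryll-Nardzewski to get a first-order definition $\phi$ of $R$, then invokes the classical homomorphism preservation theorem (respectively \L o\'s--Tarski) to say that if $\phi$ is not equivalent to an existential positive (existential) formula, there must be models $\Gamma_1,\Gamma_2$ of $\mathrm{Th}(\Gamma)$ and a homomorphism (embedding) between them violating $\phi$; a L\"owenheim--Skolem step on the two-sorted structure $(\Gamma_1,\Gamma_2;h)$ gives a countable such pair, and $\omega$-categoricity then identifies both with $\Gamma$, producing a violating endomorphism (self-embedding) and a contradiction.

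Your argument bypasses the preservation theorems entirely: the forth construction on existential positive types manufactures the needed endomorphism directly inside $\Gamma$, and the orbit-separation step then assembles the defining formula explicitly. This is more self-contained and arguably more elementary, since the homomorphism preservation theorem is itself a nontrivial result; it also yields the defining formula in a concrete shape $\bigvee_i\bigwedge_j\phi_{i,j}$. The paper's approach, on the other hand, is shorter to write down once one is willing to quote the preservation theorems, and it makes transparent \emph{why} $\omega$-categoricity is the right hypothesis: it is exactly what lets one pull the abstract witnesses from the preservation theorem back into the single model $\Gamma$.
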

\begin{proof}
It is easy to verify that existential positive formulas are
preserved by endomorphisms, and existential formulas are preserved
by self-embeddings of $\Gamma$.

For the other direction, note that the endomorphisms and
self-embeddings of $\Gamma$ contain the automorphisms of $\Gamma$,
and hence the theorem of Ryll-Nardzewski shows that $R$ has a
first-order definition in $\Gamma$; let $\phi$ be a formula defining
$R$. Suppose for contradiction that $R$ is preserved by all
endomorphisms of $\Gamma$ but has no existential positive definition
in $\Gamma$. We use the homomorphism preservation theorem (see
\cite[Section 5.5, Exercise~2]{Hodges}),
 which states that a
first-order formula $\phi$ is equivalent to an existential positive
formula modulo a first-order theory $T$ if and only if $\phi$ is
preserved by all homomorphisms between models of $T$. Since by
assumption $\phi$ is not equivalent to an existential positive
formula in $\Gamma$, there are models $\Gamma_1$ and $\Gamma_2$ of
the first-order theory of $\Gamma$ and a homomorphism $h$ from
$\Gamma_1$ to $\Gamma_2$ that violates $\phi$. By the Theorem of
L\"owenheim-Skolem (see e.g.~\cite{Hodges}) the first-order theory
of the two-sorted structure $(\Gamma_1,\Gamma_2;h)$ has a countable
model $(\Gamma_1',\Gamma_2';h')$. Since both $\Gamma_1'$ and
$\Gamma_2'$ must be countably infinite, and because $\Gamma$ is
$\omega$-categorical, we have that $\Gamma_1'$ and $\Gamma_2'$ are
isomorphic to $\Gamma$, and $h'$ can be seen as an endomorphism of
$\Gamma$ that violates $\phi$; a contradiction.

The argument for existential definitions and self-embeddings is
similar, but instead of the homomorphism preservation theorem we use
the Theorem of {\L}os-Tarski which states that a first-order formula
$\phi$ is equivalent to an existential formula modulo a first-order
theory $T$ if and only if $\phi$ is preserved by all embeddings
between models of $T$ (see e.g.~\cite[Corollary 5.4.5]{Hodges}).
\end{proof}

The following proposition, which links the operational generating
process with preservation of relations of structures, is
easy to prove; see e.g.~\cite{Szendrei}.\\

\begin{proposition}~\label{prop:loc-gen} Let $F, H$ be sets of
mappings from $V$ to $V$. Then the monoid generated by $F$ contains
$H$ iff every relation first-order definable in $G$ and preserved by
$F$ is also preserved by $H$.
\end{proposition}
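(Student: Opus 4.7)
The plan is to prove the two directions separately, leveraging the $\omega$-categoricity of $G$ and the theorem of Engeler, Ryll-Nardzewski, and Svenonius that was discussed in Section~\ref{sect:result_modeltheory}.

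For the forward implication, I would observe that for any relation $R$, the set of all operations $V \to V$ preserving $R$ is automatically a locally closed monoid: it is closed under composition, contains the identity, and whether an individual tuple is preserved depends only on finitely many values, so local interpolation respects preservation. If in addition $R$ is first-order definable in $G$, then by Ryll-Nardzewski every automorphism of $G$ preserves $R$, so this preserver monoid contains $\Aut(G)$. If moreover $R$ is preserved by $F$, then the preserver monoid contains the smallest locally closed monoid containing $F\cup\Aut(G)$, which is by definition the monoid generated by $F$. Hence every $h\in H$, being in this monoid, preserves $R$.

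For the converse, fix $h \in H$; I want to show that $h$ lies in the monoid generated by $F$. By the definition of local closure it suffices, for an arbitrary finite tuple $\bar a = (a_1,\dots,a_n) \in V^n$, to exhibit a finite composition of elements of $F \cup \Aut(G)$ that agrees with $h$ on the entries of $\bar a$. Consider the relation
\[
R := \{(g(a_1),\dots,g(a_n)) \mid g \text{ is in the monoid generated by } F\}.
\]
By construction $R$ is preserved by $F$ (in fact by the whole monoid), and $\bar a \in R$ since the identity belongs to the monoid.

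The key step, and the one I would be most careful about, is showing that $R$ is first-order definable in $G$. Since $\Aut(G)$ is contained in the monoid generated by $F$, the set $R$ is closed under the diagonal action of $\Aut(G)$ on $V^n$ and is therefore a union of $\Aut(G)$-orbits. By $\omega$-categoricity of $G$, Ryll-Nardzewski tells us that $\Aut(G)$ has only finitely many orbits on $V^n$ and that each orbit is first-order definable, so $R$ itself is first-order definable in $G$. The hypothesis of the proposition then forces $R$ to be preserved by $h$, and applying $h$ componentwise to $\bar a\in R$ gives $(h(a_1),\dots,h(a_n))\in R$. Unfolding the definition of $R$, some $g$ in the monoid generated by $F$ agrees with $h$ on $\bar a$, and by local closure such a $g$ can be replaced by a finite composition of elements of $F\cup\Aut(G)$ with the same behaviour on $\bar a$, yielding the required interpolation.
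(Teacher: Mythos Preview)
Your argument is correct and is exactly the standard proof of this Galois-connection statement. Note, however, that the paper does not actually give a proof of Proposition~\ref{prop:loc-gen}: it simply declares the result ``easy to prove'' and refers to \cite{Szendrei}. So there is no detailed proof in the paper to compare against; your proposal supplies precisely the expected details.

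One minor stylistic remark: at the end you first reduce the task to finding a finite composition from $F\cup\Aut(G)$ agreeing with $h$ on $\bar a$, then produce a $g$ in the closed monoid, and finally invoke local closure again to replace $g$ by such a composition. This detour is unnecessary: once you have $g$ in the (closed) monoid agreeing with $h$ on $\bar a$ for every finite $\bar a$, closedness of the monoid immediately gives $h$ as an element. But this is redundancy, not a gap.
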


Using Theorem~\ref{thm:pres}, we obtain an interesting and perhaps
surprising consequence of our main result. A theory $T$ is called
\emph{model-complete} iff every embedding between models of $T$ is
elementary, i.e., preserves all first-order formulas. It is
well-known that a theory $T$ is model-complete if and only if every
first-order formula is modulo $T$ equivalent to an existential
formula (see~\cite[Theorem 7.3.1]{Hodges}). A structure is said to
be model-complete iff its first-order theory is model-complete. From
the definition of model-completeness and $\omega$-categoricity it is
easy to see that an $\omega$-categorical structure $\Gamma$ is
model-complete iff all embeddings of $\Gamma$ into itself preserve
all first-order formulas. It follows from a result
in~\cite[Proposition 8]{tcsps} (based on a proof of a result by
Cameron~\cite{Cameron5} from~\cite{JunkerZiegler}) that all reducts
of the linear order of the rationals $(\mathbb Q;<)$ are
model-complete. We now see that the same is true for the random
graph.

\begin{cor}\label{cor:mc}
Every structure $\Gamma$ with a first-order definition in the random
graph is model-complete.
\end{cor}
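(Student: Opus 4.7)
The plan is to combine Theorem~\ref{thm:endos} with Theorem~\ref{thm:pres}. Since $G$ is $\omega$-categorical and $\omega$-categoricity passes to reducts, $\Gamma$ is $\omega$-categorical. By the characterization of model-completeness for $\omega$-categorical structures recalled above and by Theorem~\ref{thm:pres}, it suffices to show that every first-order definable relation in $\Gamma$ is preserved by the monoid $\M$ of self-embeddings of $\Gamma$, so this is what I aim to prove.

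Since $\M$ is a closed monoid containing $\Aut(\Gamma)\supseteq\Aut(G)$, Theorem~\ref{thm:endos} applies to it. I would first rule out case~(1): self-embeddings are injective and so cannot be constant. In case~(4), $\M$ is generated by its permutations, which are precisely the elements of $\Aut(\Gamma)$; since every first-order definable relation in $\Gamma$ is preserved by $\Aut(\Gamma)$ by Ryll-Nardzewski, Proposition~\ref{prop:loc-gen} extends this preservation to all of $\M$, completing this case.

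The main work will lie in cases~(2) and~(3). Assume $e_E\in\M$; the case $e_N\in\M$ is analogous. I would argue that this situation forces $\Aut(\Gamma)=\mathrm{Sym}(V)$, reducing back to case~(4). By Theorem~\ref{thm:reducts2}, $\Aut(\Gamma)$ is one of five closed groups containing $\Aut(G)$. For each of the four possibilities properly below $\mathrm{Sym}(V)$, $\Gamma$ possesses a characteristic definable relation---$E$, $R^{(4)}$, $R^{(3)}$, or $R^{(5)}$---that $e_E$ fails to preserve or reflect, since the image of $e_E$ induces $K_\omega$. For example, $e_E$ cannot reflect $E$, as it sends any non-adjacent pair to an adjacent one; a brief parity check, using that a $k$-clique has $\binom{k}{2}$ edges, handles the others. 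Hence $\Aut(\Gamma)=\mathrm{Sym}(V)$. Then $\M$ is a closed monoid of injections containing $\mathrm{Sym}(V)$, and since $\mathrm{Sym}(V)$ is dense in the monoid of all injections on $V$, $\M$ coincides with the monoid of all injections and is generated by its permutations---placing us in case~(4). The main obstacle is this case analysis, which depends on Thomas' classification, but fortunately that classification is reproved in the course of establishing Theorem~\ref{thm:endos} in the present paper.
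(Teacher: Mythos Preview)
Your overall strategy---apply Theorem~\ref{thm:endos} to the self-embedding monoid $\M$, dispose of case~(1) by injectivity, handle case~(4) via Proposition~\ref{prop:loc-gen}, and argue that cases~(2) and~(3) force $\Aut(\Gamma)=\mathrm{Sym}(V)$---is sound, and the conclusion you want in cases~(2)/(3) is in fact correct. The gap is in how you reach that conclusion. You invoke Thomas' classification to say that if $\Aut(\Gamma)$ were properly below $\mathrm{Sym}(V)$, then $\Gamma$ would possess a characteristic \emph{definable} relation ($E$, $R^{(3)}$, $R^{(4)}$, or $R^{(5)}$) that $e_E$ fails to reflect, and you treat this as contradicting $e_E\in\M$. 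But a self-embedding of $\Gamma$ is only obliged to preserve and reflect the relations in the \emph{signature} of $\Gamma$; it need not respect arbitrary first-order definable relations. Showing that self-embeddings preserve all definable relations is precisely model-completeness---the statement under proof---so the argument as written is circular. Put differently: first-order interdefinability, which is all Theorem~\ref{thm:reducts2} gives, does not imply equality of self-embedding monoids.

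The repair bypasses Thomas entirely and is what the paper does. If $e_E$ is a self-embedding of $\Gamma$, then for every signature relation $R$ and every tuple $a$ one has $a\in R$ iff $e_E(a)\in R$; but $e_E(a)$ lies in a clique, so membership of $e_E(a)$ in $R$ is determined by its equality type, hence (by injectivity of $e_E$) by the equality type of $a$. Thus each signature relation of $\Gamma$ is definable in $(V;=)$, whence $\Aut(\Gamma)=\mathrm{Sym}(V)$ directly, and you can finish exactly as you outlined. The paper arranges the bookkeeping slightly differently---it first expands $\Gamma$ by $\neq$ and by $\neg R$ for each relation $R$, so that endomorphisms coincide with self-embeddings, and then applies Theorem~\ref{thm:endos} to the endomorphism monoid---but the decisive step in cases~(2)/(3) is this same direct argument, with no appeal to the five-group classification.
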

\begin{proof}
An $\omega$-categorical structure $\Gamma$ is model-complete if and
only if all embeddings of $\Gamma$ into itself are locally generated
by the automorphisms of $\Gamma$. To see this, first assume that the
automorphisms of $\Gamma$ locally generate the self-embeddings of
$\Gamma$, and let $\phi$ be a first-order formula. By the equivalent
characterization of model-completeness mentioned above it suffices
to show that $\phi$ is equivalent to an existential formula. Since
$\phi$ is preserved by automorphisms of $\Gamma$, it is by
Proposition~\ref{prop:loc-gen} also preserved by self-embeddings of
$\Gamma$. Then Theorem~\ref{thm:pres} implies that $\phi$ is
equivalent to an existential formula. Conversely, suppose that all
first-order formulas are equivalent to an existential formula in
$\Gamma$. Since existential formulas are preserved by
self-embeddings of $\Gamma$, also the first-order formulas are
preserved by self-embeddings of $\Gamma$. By
Proposition~\ref{prop:loc-gen}, the self-embeddings are locally
generated by the automorphisms of $\Gamma$.

We thus show that the self-embeddings of $\Gamma$ are generated by
its automorphisms. Note that when we expand $\Gamma$ by $\neq$ and
by $\neg R$ for every relation in $\Gamma$, then the resulting
structure has the same set of self-embeddings. Hence, we assume in
the following that $\Gamma$ contains $\neq$ and $\neg R$ for all
relations $R$, and hence that all endomorphisms of $\Gamma$ are
embeddings. We apply Theorem~\ref{thm:endos}. If Case (4) of the
theorem holds, we are done. Note that $\Gamma$ cannot have a
constant endomorphism since $\Gamma$ contains $\neq$. So suppose
that $\Gamma$ is preserved by $e_N$. The substructure of $\Gamma$
induced by the image $e_N[V]$ of $e_N$ has a first-order definition
in $(e_N[V];=)$ (since all occurrences of $E$ in a formula defining
a relation of $\Gamma$ can be replaced by $false$); but then, since
$e_N$ is an embedding, $\Gamma$ has a first-order definition in
$(V;=)$. The set of self-embeddings of $\Gamma$ is then the set of
all injective mappings from $V$ to $V$. It follows that $\Gamma$ is
model-complete, because the monoid of injective mappings from $V$ to
$V$ is locally generated by the permutations of $V$ (i.e., the
automorphisms of $\Gamma$). The argument for $e_E$ is analogous.
\end{proof}

In $\omega$-categorical structures, homogeneity is equivalent to
having \emph{quantifier-elimination} (Theorem~2.22 in \cite{Oligo}):
every first-order formula is in $G$ equivalent to a quantifier-free
first-order formula; hence, the random graph has quantifier
elimination. The same is not true for its reducts. For example, any
two 2-element substructures of the structure
$$\Gamma = (V; \{(x,y,z) \; | \; E(x,y) \wedge \neg E(y,z)\})$$
are isomorphic. But since there is a first-order definition of $G$
in $\Gamma$, an isomorphism between a 2-element substructure with an
edge and a 2-element substructure without an edge cannot be extended
to an automorphism of $\Gamma$. However, our results imply that a
structure $\Gamma$ with a first-order definition in the random graph
is homogeneous when $\Gamma$ is expanded by all relations with an
existential definition in $\Gamma$.

\begin{cor}\label{cor:exdefn}
Every structure $\Gamma$ with a first-order definition in the random
graph has quantifier-elimination if it is expanded by all relations
with an existential definition in $\Gamma$.
\end{cor}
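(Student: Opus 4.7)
The plan is to deduce this from the model-completeness of $\Gamma$ already established in Corollary~\ref{cor:mc}. Let $\Gamma^*$ denote the expansion of $\Gamma$ by all relations with an existential definition in $\Gamma$. Since the new relation symbols of $\Gamma^*$ are all first-order definable in $\Gamma$, the structures $\Gamma$ and $\Gamma^*$ are first-order interdefinable, and in particular $\Gamma^*$ is $\omega$-categorical. Moreover, any first-order formula over the signature of $\Gamma^*$ translates, by replacing each new symbol with its defining formula, into an equivalent first-order formula over the signature of $\Gamma$.

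Now let $\phi(x_1,\dots,x_n)$ be any first-order formula over the signature of $\Gamma^*$, and let $\phi'(x_1,\dots,x_n)$ denote its translation to the signature of $\Gamma$. By Corollary~\ref{cor:mc}, $\Gamma$ is model-complete, so $\phi'$ is equivalent modulo the theory of $\Gamma$ to an existential formula $\exists \bar{y}.\, \psi(\bar{x}, \bar{y})$ with $\psi$ quantifier-free. Hence the relation $R \subseteq V^n$ defined by $\phi'$ in $\Gamma$ admits an existential definition, so $R$ is one of the relations of $\Gamma^*$. Writing $R$ also for the corresponding symbol in $\Gamma^*$, the formula $\phi(x_1,\dots,x_n)$ is then equivalent in $\Gamma^*$ to the atomic formula $R(x_1,\dots,x_n)$, which is quantifier-free. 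This establishes quantifier elimination for $\Gamma^*$.

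There is no real obstacle beyond Corollary~\ref{cor:mc}: once model-completeness is in hand, the key point is merely that model-completeness collapses the notions of \emph{first-order definable} and \emph{existentially definable} over $\Gamma$, so that expanding by all existentially definable relations automatically supplies a relation symbol for every first-order definable relation, from which quantifier elimination is immediate.
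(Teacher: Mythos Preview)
Your proof is correct and follows the same approach as the paper's: both deduce quantifier elimination for the expansion directly from Corollary~\ref{cor:mc}, using that model-completeness makes every first-order definable relation existentially definable and hence a relation symbol of $\Gamma^*$. The paper's proof is a one-line appeal to this fact, while yours spells out the details explicitly.
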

\begin{proof}
 This follows directly from the model-completeness of $\Gamma$ and the fact mentioned above that in
model-complete structures first-order formulas are equivalent to
existential formulas (see e.g.~\cite[Theorem~7.3.1]{Hodges}).
\end{proof}

As another application of our main theorem, we refine
Theorem~\ref{thm:reducts2} by giving a finer (at least in theory)
classification of the reducts of the random graph.

\begin{cor}\label{cor:classificationUpToExistential}
Up to existential interdefinability, there are exactly five
different structures with a first-order definition in the random
graph.
\end{cor}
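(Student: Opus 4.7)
The plan is to combine the model-completeness of all reducts, established in Corollary~\ref{cor:mc}, with Theorem~\ref{thm:pres} in order to reduce the statement to Theorem~\ref{thm:reducts2}. Every existential formula is in particular a first-order formula, so existential interdefinability refines first-order interdefinability, and Theorem~\ref{thm:reducts2} immediately yields the lower bound of at least five classes. The task is then to verify that no first-order interdefinability class splits any further under the finer relation.

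To that end, I would fix two first-order interdefinable reducts $\Gamma_1$ and $\Gamma_2$ of the random graph, and show that every relation $R$ of $\Gamma_1$ has an existential definition in $\Gamma_2$ (the converse direction being symmetric). Since reducts of $\omega$-categorical structures are $\omega$-categorical, Theorem~\ref{thm:pres} applies, and it suffices to check that $R$ is preserved by every self-embedding of $\Gamma_2$. By first-order interdefinability, $R$ is defined in $\Gamma_2$ by some first-order formula $\phi$. By Corollary~\ref{cor:mc}, $\Gamma_2$ is model-complete, which in particular implies that every self-embedding of $\Gamma_2$ is elementary, and therefore preserves $\phi$, hence preserves $R$.

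Putting both directions together, the existential interdefinability classes coincide with the first-order interdefinability classes, so Theorem~\ref{thm:reducts2} produces exactly five classes and the corollary follows. I do not foresee any substantial obstacle here: the combinatorial content has already been absorbed into Theorem~\ref{thm:endos} and the model-theoretic consequences derived from it, and what remains is essentially a clean application of model-completeness together with the self-embedding characterization of existential definability provided by Theorem~\ref{thm:pres}.
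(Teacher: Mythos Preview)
Your argument is correct. Once Corollary~\ref{cor:mc} is available, model-completeness of $\Gamma_2$ immediately gives that every first-order definable relation in $\Gamma_2$ is existentially definable there (either via elementarity of self-embeddings and Theorem~\ref{thm:pres}, as you phrase it, or directly via the characterization of model-completeness quoted before Corollary~\ref{cor:mc}); hence first-order and existential interdefinability coincide on reducts of $G$, and Theorem~\ref{thm:reducts2} finishes.

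The paper takes a slightly different route: rather than invoking Corollary~\ref{cor:mc} as a black box, it re-runs the case analysis of Theorem~\ref{thm:endos} in the spirit of the proof of Corollary~\ref{cor:mc}, arguing that either the self-embeddings of $\Gamma$ are generated by its automorphisms (so $\Gamma$ is existentially interdefinable with one of the five canonical structures), or $\Gamma$ already has an existential definition in $(V;=)$. Your approach is more economical, since it uses the already-established model-completeness directly; the paper's approach is self-contained at the level of Theorem~\ref{thm:endos} but duplicates work. Substantively they amount to the same thing, and nothing is missing from your version.
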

\begin{proof}
In the same way as in the proof of Corollary~\ref{cor:mc}, we can
use Theorem~\ref{thm:endos} to show that either the self-embeddings
of a reduct $\Gamma$ are generated by the automorphisms, and
$\Gamma$ is existentially interdefinable with one of the structures
described in Theorem~\ref{thm:reducts}; or otherwise $\Gamma$ has an
existential definition in $(V;=)$, which is again one of the five
cases from Theorem~\ref{thm:reducts}.
\end{proof}

The endomorphism monoid $\End(G)$ of the random graph has been
studied in~\cite{DelicDolinka,BonatoDelic,RandomGraphMonoid}. By
Theorem~\ref{thm:pres}, studying closed transformation monoids
containing $\End(G)$ is equivalent to studying structures with a
first-order definition in $G$ up to existential positive
interdefinability. A complete classification of all locally closed
transformation monoids that contain all permutations of $V$, and
hence of the reducts of $(V; =)$ up to existential positive
interdefinability, has been given in~\cite{BodChenPinsker}; there is
only a countable number of such monoids. The results of the present
paper are far from providing a full classification of the locally
closed transformation monoids that contain the automorphisms of the
random graph --- this is left for future investigation.

\section{Additional notions and notation}

We will write $E(x,y)$ or $(x,y)\in E$ to express that two vertices
$x,y\in V$ are adjacent in the random graph. The binary relation
$N(x,y)$ is defined by $\neg E(x,y)\wedge x\neq y$. Pairs $\{x,y\}$
with $N(x,y)$ are referred to as \emph{non-edges}.

We write function applications of $-$ without braces.

Often when we have a graph $\P=(P;D)$, and $S\subseteq P$, then for
notational simplicity we write $(S;D)$ for the subgraph of $\P$
induced by $S$, i.e., we ignore the fact that $D$ would have to be
restricted to $S^2$.

We say that an operation $e: V\To V$ (a set $F$ of operations from
$V$ to $V$) is \emph{generated} by a set of operations $H$ from $V$
to $V$ iff it is contained in the monoid generated by $H$.

\section{Ramsey-theoretic Preliminaries}\label{ssect:ramsey}

We prepare the proof of our main theorem by recalling some
Ramsey-type theorems and extending these theorems for our purposes.
The notions and results of this section are of an abstract
Ramsey-theoretic nature and do not refer to concrete structures such
as the random graph.

We start by recalling a theorem on ordered structures due to
 Ne\v set\v ril and R\"odl \cite{NesetrilRoedlOrderedStructures} which we will make heavy use of. Let
$\tau=\tau' \cup \{\prec\}$ be a relational signature, and let $\cal
C(\tau)$ be the class of all finite $\tau$-structures $\S$ where
$\prec$ denotes a linear order on the domain of $\S$. For
$\tau$-structures $\A, \B$, let ${\A}\choose{\B}$ be the set of all
substructures of $\A$ that are isomorphic to $\B$ (we also refer to
members of ${\A}\choose{\B}$ as \emph{copies of $\B$ in $\A$}). For
a finite number $k\geq 1$, a \emph{$k$-coloring} of the copies of
$\B$ in $\A$ is simply a mapping $\chi$ from ${{\A}\choose{\B}}$
into a
 set of size $k$.

\begin{definition}
For $\S,\H,\P \in \cal C(\tau)$ and $k \geq 1$, we write $\S
\rightarrow (\H)^\P_k$ iff for every $k$-coloring $\chi$ of the
copies of $\P$ in $\S$ there exists a copy $\H'$ of $\H$ in $\S$
such that all copies of $\P$ in $\H'$ have the same color (under
$\chi$).
\end{definition}

\begin{theorem}[of~\cite{NesetrilRoedlOrderedStructures,
NesetrilRoedlPartite, AbramsonHarrington}]\label{thm:ramsey} The
class $\cal C(\tau)$ of all finite relational ordered
$\tau$-structures is a \emph{Ramsey class}, i.e., for all $\H,\P \in
\cal C(\tau)$ and $k \geq 1$ there exists $\S \in \cal C(\tau)$ such
that $\S \rightarrow (\H)^\P_k$.
\end{theorem}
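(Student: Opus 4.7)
The plan is to follow the classical \emph{partite construction} method of Ne\v set\v ril and R\"odl, proceeding by induction on $|P|$, the number of vertices of $\P$. The base case $|P|=1$ reduces to the finite Ramsey theorem for linearly ordered sets, since a copy of a one-vertex ordered $\tau$-structure is essentially a choice of a single vertex realising a prescribed quantifier-free type in the unary relations, so a long enough monotype chain pigeonholes a monochromatic $\H$-copy.

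For the inductive step, fix $\H, \P \in \cal C(\tau)$ and $k$. I would work with the auxiliary notion of an \emph{$A$-partite} ordered $\tau$-structure: one whose vertex set is partitioned into blocks indexed by the vertex set $A$ of $\H$, with the linear order refining the block partition and with the copies of $\H$ of interest being transversals meeting each block exactly once. The heart of the method is the \emph{partite lemma}: for suitable partite pictures $\P^*$ and $\H^*$ and every $k$, there is a partite picture $\S^*$ such that any $k$-colouring of the transversal $\P^*$-copies in $\S^*$ admits a monochromatic transversal $\H^*$-copy. This lemma is established by a Hales--Jewett-style argument on the ``free'' positions within blocks, using the outer induction hypothesis to handle colourings that depend only on smaller sub-pictures.

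With the partite lemma in hand, the partite construction itself runs as follows: apply the classical finite Ramsey theorem on sets to obtain a large ordered $\tau$-structure $\B_0$ containing $\H$ in which the ``shape'' of copies of $\P$ is controlled; enumerate the copies $\P_1,\dots,\P_N$ of $\P$ in $\B_0$; then build a chain of partite pictures $\B_0, \B_1, \dots, \B_N$ where at stage $i$ the partite lemma is invoked so that any colouring of the $\P_i$-copies in $\B_i$ is constant on some embedded copy of $\B_{i-1}$. Collapsing the block structure of $\B_N$ yields the required $\S$.

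The main obstacle — and the technical heart of the method — is the partite amalgamation at each stage: one must glue partite structures along shared substructures without creating spurious copies of $\P$ or $\H$, and one must verify that the linear orders on the pieces extend consistently to a single linear order on the amalgam. The rigidity supplied by $\prec$ is essential here; without the linear order, automorphisms of substructures would cause the amalgams to over-count copies and the induction would collapse, which is precisely why the theorem is formulated for ordered structures.
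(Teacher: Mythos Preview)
The paper does not prove this theorem at all: it is quoted as a black-box result from the cited references of Ne\v set\v ril--R\"odl and Abramson--Harrington, and is then used as a tool in the subsequent lemmas. So there is no ``paper's own proof'' to compare against; your proposal is really a sketch of the proof one would find in those references.

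As such a sketch, your outline is broadly on target---the partite lemma plus the partite construction is indeed the Ne\v set\v ril--R\"odl method, and your remark about the role of the linear order in controlling automorphisms and amalgamation is exactly the point. Two small inaccuracies are worth flagging. First, the overall architecture is not really an induction on $|P|$: in the standard presentation the partite lemma is proved directly from the Hales--Jewett theorem (via a product/power construction on the partite picture), with no appeal to an ``outer induction hypothesis'' on smaller patterns. Second, in the partite construction the initial object $\B_0$ is not obtained from the classical Ramsey theorem on sets; one simply takes $\B_0$ to be any structure rich enough in copies of $\H$ (often $\H$ itself, or a suitable picture over it), enumerates the copies of $\P$ inside it, and then iterates the partite lemma once per copy. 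These are minor mis-rememberings of the machinery rather than genuine gaps, but if you were to write the proof out in full you would want to straighten them out.
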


\begin{cor}
\label{cor:coloringEdgesAndNonEdgesOfG}
    For every finite graph $\H$ and for all colorings $\chi_E$ and $\chi_N$ of the edges and the non-edges of
    $G$, respectively, by finitely many colors, there exists an isomorphic copy of $\H$ in $G$ which both colorings are constant on.
\end{cor}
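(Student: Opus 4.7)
The plan is to apply Theorem~\ref{thm:ramsey} twice in the signature $\tau = \{E,\prec\}$ of ordered graphs, and then transfer the resulting combinatorial object into $G$ by means of the extension property.

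First I would fix an arbitrary enumeration $V = \{v_1,v_2,\dots\}$, viewing $G$ as an ordered graph $G^{<}$, and choose any linear ordering on the vertex set of $\H$ to obtain an ordered graph $\H^{<} \in \cal C(\tau)$. Let $k_E$ and $k_N$ be the numbers of colors used by $\chi_E$ and $\chi_N$, respectively, and let $\P_E, \P_N \in \cal C(\tau)$ be the two-vertex ordered graphs consisting of one edge and one non-edge. Since copies of $\P_E$ (resp.\ $\P_N$) inside any $\S \in \cal C(\tau)$ correspond bijectively to its edges (resp.\ non-edges), the restrictions of $\chi_E$ and $\chi_N$ to any finite ordered induced subgraph of $G^{<}$ are the same data as $k_E$-colorings of $\P_E$-copies and $k_N$-colorings of $\P_N$-copies. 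Applying Theorem~\ref{thm:ramsey} I would then produce $\S_1 \in \cal C(\tau)$ with $\S_1 \rightarrow (\H^{<})^{\P_E}_{k_E}$, and, treating $\S_1$ as the target of the next Ramsey statement, produce $\S_2 \in \cal C(\tau)$ with $\S_2 \rightarrow (\S_1)^{\P_N}_{k_N}$.

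The last step is to realize $\S_2$ as an induced ordered subgraph of $G^{<}$: listing the vertices of $\S_2$ in $\prec$-order as $w_1,\dots,w_n$, I would choose $v_{i_1},\dots,v_{i_n}$ inductively with $i_1 < \dots < i_n$ so that the edge-pattern of $v_{i_j}$ with $v_{i_1},\dots,v_{i_{j-1}}$ matches that of $w_j$ with $w_1,\dots,w_{j-1}$; the extension property of $G$ provides infinitely many valid choices at each step, so we can always keep pushing the index $i_j$ forward. Inside this embedded copy of $\S_2$, the coloring $\chi_N$ is a $k_N$-coloring of the $\P_N$-copies, so by the choice of $\S_2$ there is a copy of $\S_1$ on whose non-edges $\chi_N$ is constant. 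Restricting $\chi_E$ to this copy yields a $k_E$-coloring of its $\P_E$-copies, so by the choice of $\S_1$ there is a copy of $\H^{<}$ inside it on whose edges $\chi_E$ is constant as well. Forgetting the order gives the desired copy of $\H$ in $G$.

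The main (and really only) non-Ramsey-theoretic step is the last embedding, but with the enumeration of $V$ fixed in advance the extension property makes the inductive construction routine; all of the substance of the corollary is carried by the two applications of Theorem~\ref{thm:ramsey}.
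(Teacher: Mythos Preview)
Your proof is correct and follows essentially the same approach as the paper: apply Theorem~\ref{thm:ramsey} twice in the ordered-graph signature (once for the edge pattern $\P_E$, once for the non-edge pattern $\P_N$), then embed the resulting finite structure into $G$ using the extension property. The only cosmetic differences are that the paper uses a single color bound $k$ rather than separate $k_E,k_N$, and it drops the linear order after each application rather than carrying it through to the embedding step; neither affects the argument.
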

\begin{proof}
    Let $k$ be the number of colors used altogether by $\chi_E$ and $\chi_N$.
    Let $\prec$ be any total order on the domain of $\H$, and denote the structure obtained from $\H$ by adding the order $\prec$ to the signature
    by $\bar{\H}$. Consider the complete graph $\K_2$ on two vertices, and order its two vertices anyhow to arrive at a structure $\bar{\K_2}$. Then the
    coloring $\chi_E$ of the edges of $\H$ can be viewed as a
    coloring of the copies of $\bar{\K_2}$ in $\bar{\H}$. Let $\bar{\S}$ with $\bar{\S}\rightarrow (\bar{\H})^{\bar{\K_2}}_k$ be provided by
    the preceding theorem, and let $\S$ be $\bar{\S}$ without the order. Then $\S$ is a graph with the property that whenever
    we color its edges with $k$ colors, then there is a copy of $\H$
    in $\S$ all of whose edges have the same color. Now we repeat
    the argument for the non-edges, starting from $\S$ instead of
    $\H$. We then arrive at a graph $\T$ with the property that
    whenever we color its edges and non-edges by $k$ colors, then
    there is a copy $\H'$ of $\H$ in $\T$ such that all edges of $\H'$ have the same color,
    and such that
    non-edges of $\H'$ have the same color. $\T$ has a copy in $G$,
    proving the claim.
\end{proof}

We will not only need to color edges of graphs, but also of graphs
equipped with additional structure.

\begin{defn}
    An \emph{$n$-partitioned graph} is a structure $\U=(U;F,U_1,\ldots,U_n)$, where $(U;F)$ is a graph and each
    $U_i$ is a subset of $U$ such that the $U_i$ form a partition of $U$.
\end{defn}

\begin{defn}
    Let $\U=(U;F)$ be a graph, and let $S_1,S_2$ be disjoint subsets of $U$. Let
    $\chi$ be a coloring of the two-element
    subsets of $U$. We say
    that $\chi$ is \emph{canonical on $S_1$} iff the
    color of a two-element subset of $S_1$ depends only on
    whether this set is an
    edge or a non-edge. Similarly, we say that $\chi$ is
    \emph{canonical between $S_1$ and $S_2$} iff  the
    color of every pair $\{s_1,s_2\}$, where $s_1\in S_1$ and $s_2\in S_2$,
    depends only on whether or not this pair is an edge.
\end{defn}

\begin{defn}
    Let $\U=(U;F,U_1,\ldots,U_n)$ be an $n$-partitioned graph.
    We say that a coloring of the two-element subsets of
    $U$ is \emph{canonical on $\U$} iff it is canonical on all $U_i$ and between all distinct $U_i,U_j$.
\end{defn}

\begin{lem}[The $n$-partitioned graph Ramsey
lemma]\label{lem:partitionedGraphRamseyLemma}
    Let $n, k\geq 1$. For any finite $n$-partitioned graph
    $\U=(U;F,U_1,\ldots,U_n)$ there exists a finite $n$-partitioned graph
    $\Q=(Q;D,Q_1,\ldots,Q_n)$ with the property that for all colorings of
    the two-element subsets of $Q$ with $k$ colors,
    there exists a copy of $\U$ in $\Q$ on which the coloring is canonical.
\end{lem}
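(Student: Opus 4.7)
The plan is to encode the partition structure into the signature and apply the ordered Ramsey theorem (Theorem~\ref{thm:ramsey}) iteratively, using a linear order that is \emph{compatible with the partition} so that ordered and unordered 2-element types coincide inside copies of $\bar\U$.

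Let $\tau' = \{F,U_1,\ldots,U_n\}$ with $F$ binary and each $U_i$ unary, and set $\tau = \tau' \cup \{\prec\}$. First I would fix a linear order $\prec$ on $U$ with the property that $x \prec y$ whenever $x \in U_i$, $y \in U_j$ and $i < j$ (the order inside each $U_i$ being chosen arbitrarily), and let $\bar\U \in \cal C(\tau)$ denote the resulting ordered $\tau$-structure. Then I would enumerate as $\P_1,\ldots,\P_m$ the finitely many isomorphism types in $\cal C(\tau)$ of ordered two-element $\tau$-structures that embed into $\bar\U$. Using Theorem~\ref{thm:ramsey}, I build a chain $\bar\H_0 = \bar\U, \bar\H_1, \ldots, \bar\H_m$ in $\cal C(\tau)$ with $\bar\H_\ell \rightarrow (\bar\H_{\ell-1})^{\P_\ell}_k$ for each $\ell \geq 1$, set $\bar\Q = \bar\H_m$, and let $\Q = (Q;D,Q_1,\ldots,Q_n)$ be $\bar\Q$ with the order forgotten.

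To verify the property, given any $k$-coloring $\chi$ of the 2-subsets of $Q$, I would iteratively apply the Ramsey properties of $\bar\H_m, \bar\H_{m-1},\ldots,\bar\H_1$: starting from $\bar\Q$ and $\ell = m$, restrict $\chi$ to copies of $\P_\ell$ in the current substructure and extract an isomorphic copy of $\bar\H_{\ell-1}$ on which $\chi$ is constant on $\P_\ell$-copies; since we pass to substructures, constancy on $\P_{\ell+1},\ldots,\P_m$ is preserved. This yields a copy $\bar\U' \cong \bar\U$ inside $\bar\Q$ on which $\chi$ is constant on the copies of each $\P_\ell$, i.e., any two 2-subsets of $\bar\U'$ of the same ordered $\tau$-type receive the same color.

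The last step is to translate ordered-type monochromaticity on $\bar\U'$ into canonicity on the underlying $n$-partitioned graph $\U'$. Because $\bar\U' \cong \bar\U$, the parts $U_1',\ldots,U_n'$ inherit the block property $U_i' \prec U_j'$ for $i < j$; hence any 2-subset $\{x,y\}$ of $\bar\U'$ with $x \in U_i'$, $y \in U_j'$ (say $i \leq j$) has its ordered $\tau$-type uniquely determined by the unordered data $\{i,j\}$ together with the edge/non-edge status. Canonicity on $\U'$ in the sense of the lemma then follows immediately. The one point that required care—and the reason Theorem~\ref{thm:ramsey} is not invoked directly—is precisely this translation: without the partition-compatible choice of $\prec$, the two ordered types $U_i \prec U_j$ and $U_j \prec U_i$ between distinct parts would give rise to two different color classes, breaking canonicity between $U_i'$ and $U_j'$; the block order collapses them into one.
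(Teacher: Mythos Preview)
Your proof is correct and follows essentially the same approach as the paper: fix a partition-compatible linear order on $\U$ and iterate Theorem~\ref{thm:ramsey} once for each of the (at most $2(n+\binom{n}{2})$) two-element ordered $\tau$-types occurring in $\bar\U$. The paper carries this out explicitly for $n=2$ (with the six structures $\mathscr{L}^1,\ldots,\mathscr{L}^6$) and inserts an extra remark that each intermediate Ramsey witness may be re-ordered so as to retain the block property, whereas you sidestep that step by working with ordered structures throughout and invoking the block order only on the final copy $\bar\U'$, where it is automatic from $\bar\U'\cong\bar\U$.
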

\begin{proof}

We show the lemma for $n=2$; the generalization to larger $n$ is
straightforward. For $n=2$, we apply Theorem~\ref{thm:ramsey} six
times: Once for the edges in $U_1$, once for the edges in $U_2$,
once for the edges between $U_1$ and $U_2$, and then the same for
all three kinds of non-edges.

In general, we would have to apply the theorem $2\ (n+{n
\choose{2}})$ times: Once for the edges of each part $U_i$, once for
the edges between any two distinct parts $U_i, U_j$, and then the
same for all non-edges on and between parts.

So assume $n=2$. We exhibit the idea in detail for the edges between
$U_1$ and $U_2$. Let $\prec$ be any total order on $U$ with the
property that $u_1\prec u_2$ for all $u_1\in U_1$, $u_2\in U_2$.
Consider the 2-partitioned graph ${\mathscr
L}^1=(\{a,b\};\{(a,b),(b,a)\},\{a\},\{b\})$ and order its vertices
by setting $a\prec b$; so ${\mathscr L}^1$ consists of two adjacent
vertices which are ordered somehow, and which lie in different
parts. By Theorem~\ref{thm:ramsey}, there exists an ordered
partitioned graph $\Q^1=(Q^1;D^1,Q_1^1,Q_2^1,\prec)$ such that $\Q^1
\rightarrow (\U)^{{\mathscr L}^1}_k$.

Now, if we change the order on $\Q^1$ in such a way that $r\prec s$
for all $r\in Q^1_1$ and all $s\in Q^1_2$ and such that the order
within the parts $Q^1_1, Q^1_2$ remains unaltered, then the
statement $\Q^1 \rightarrow (\U)^{{\mathscr L}^1}_k$ still holds:
For, given a coloring of the copies of ${\mathscr L}^1$ with respect
to the new ordering, we obtain a coloring of (possibly fewer) copies
of ${\mathscr L}^1$ with respect to the old ordering. There, we
obtain a copy $\U'$ of $\U$ such that all copies of ${\mathscr L}^1$
in $\U'$ have the same color. But in this copy, by the choice of the
order on $\U$, we have that $r\prec s$ for all $r\in U_1'$ and all
$s\in U_2'$. Therefore, this copy is also a substructure of $\Q^1$
with respect to the new ordering.

Since we can change the ordering on $\Q^1$ in the way described
above, the colorings of the copies of ${\mathscr L}^1$ are just
colorings of those pairs $\{r,s\}$, with $r\in Q_1^1$ and $s\in
Q_2^1$, which are edges.

Now we repeat the process with the structure ${\mathscr
L}^2=(\{a,b\};\{(a,b),(b,a)\},\{a,b\},\emptyset)$, ordered again by
setting $a\prec b$, starting with $\Q^1$. We then obtain a structure
$\Q^2$; this step takes care of the edges which lie within $U_1$.
After that we proceed with ${\mathscr
L}^3=(\{a,b\};\{(a,b),(b,a)\},\emptyset,\{a,b\})$, thereby taking
care of the edges within $U_2$. We then apply
Theorem~\ref{thm:ramsey} three more times with the structures
${\mathscr L}^4=(\{a,b\};\emptyset,\{a\},\{b\})$, ${\mathscr
L}^5=(\{a,b\};\emptyset,\{a,b\},\emptyset)$, and ${\mathscr
L}^6=(\{a,b\};\emptyset,\emptyset,\{a,b\})$, in order to ensure
homogeneous non-edges.
\end{proof}

The preceding lemma on partitioned graphs was an auxiliary tool to
cope with graphs which have some distinguished vertices, as defined
in the following.

\begin{defn}
    An $n$-constant graph is a structure $\U=(U;F,u_1,\ldots,u_n)$, where $\U=(U;F)$ is a graph, and $u_i\in U$ are distinct.
\end{defn}

Observe that $n$-constant graphs are no relational structures;
therefore, in order to apply Theorem~\ref{thm:ramsey}, we have to
make them relational: To every $n$-constant graph
$\U=(U;F,u_1,\ldots,u_n)$ we can assign an $n+2^n$-partitioned graph
$\tilde{\U}=(U;F,\{u_1\},\ldots,\{u_n\},U_1,\ldots,U_{2^n})$ in
which the $u_i$ belong to singleton sets, and in which for every
possible relative position (edge or non-edge) to the $u_i$ we have a
set $U_j$ of all elements in $U\setminus\{u_1,\ldots,u_n\}$ having
this position. (In the language of model theory, every of the
$n+2^n$ sets corresponds to a maximal quantifier-free $1$-type over
the structure $\U$.) We call the parts $U_i$ the \emph{proper} parts
of $\tilde{\U}$.

\begin{defn}
    Let $\U=(U;F,u_1,\ldots,u_n)$ be an $n$-constant graph. We say that a coloring of the two-element subsets of $U$ is \emph{canonical on $\U$} iff it is canonical on the corresponding $n+2^n$-partitioned graph.
\end{defn}

We now arrive at the goal of this section, namely the following
lemma, which we are going to apply to mappings on the random graph
numerous times in the sections to come.

\begin{lem}[The $n$-constant graph Ramsey lemma]\label{lem:constantGraphRamseyLemma}
    Let $n, k\geq 1$. For any finite $n$-constant graph
    $\U=(U;F,u_1,\ldots,u_n)$ there exists a finite $n$-constant graph
    $\Q=(Q;D,q_1,\ldots,q_n)$ with the property that for all colorings of the two-element subsets of
    $Q$ with $k$ colors, there exists a copy of $\U$ in
    $\Q$ on which the coloring is canonical.
\end{lem}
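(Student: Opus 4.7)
The plan is to bootstrap from the partitioned version, Lemma~\ref{lem:partitionedGraphRamseyLemma}, via the reduction $\U \mapsto \tilde{\U}$ described just above the statement. First, I would form $\tilde{\U} = (U; F, \{u_1\}, \ldots, \{u_n\}, U_1, \ldots, U_{2^n})$ and apply Lemma~\ref{lem:partitionedGraphRamseyLemma} to it with $k$ colors, obtaining a finite $(n+2^n)$-partitioned graph $\tilde{\Q} = (Q; D, \tilde{Q}_1, \ldots, \tilde{Q}_{n+2^n})$. To turn $\tilde{\Q}$ into the required $n$-constant graph, I would pick an arbitrary $q_i \in \tilde{Q}_i$ for each $i = 1, \ldots, n$ (each such part is nonempty, since it must host a singleton in any copy of $\tilde{\U}$) and set $\Q = (Q; D, q_1, \ldots, q_n)$. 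The distinguished vertices $q_i$ here are merely cosmetic, making $\Q$ an $n$-constant graph; the copies of $\U$ produced below will carry their own distinguished vertices.

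Given any $k$-coloring $\chi$ of the two-element subsets of $Q$, Lemma~\ref{lem:partitionedGraphRamseyLemma} yields a copy $\P = (V; D|_V, V_1, \ldots, V_{n+2^n})$ of $\tilde{\U}$ in $\tilde{\Q}$ on which $\chi$ is canonical. Since the first $n$ parts of $\tilde{\U}$ are singletons, so are $V_1, \ldots, V_n$; write $V_i = \{v_i\}$. The structure $\U' = (V; D|_V, v_1, \ldots, v_n)$ is then an $n$-constant graph isomorphic to $\U$ (via the isomorphism underlying $\P$, which sends $u_i$ to $v_i$), i.e., a copy of $\U$ in $\Q$ in the sense of $n$-constant graphs.

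The last step is to check that canonicity in the two senses coincides on this copy. By definition, $\chi$ is canonical on $\U'$ as an $n$-constant graph iff it is canonical on the associated $(n+2^n)$-partitioned graph $\widetilde{\U'}$, whose partition is determined by the adjacency pattern of each vertex to $v_1, \ldots, v_n$. This partition is exactly $(\{v_1\}, \ldots, \{v_n\}, V_{n+1}, \ldots, V_{n+2^n})$, namely the partition inherited by $\P$ as a substructure of $\tilde{\Q}$. Hence canonicity of $\chi$ on $\P$ (partitioned-graph sense) is canonicity of $\chi$ on $\U'$ ($n$-constant sense), and the lemma follows. No genuine obstacle arises: all the Ramsey-theoretic work is already packaged in Lemma~\ref{lem:partitionedGraphRamseyLemma}, and the only step deserving care is the partition identification just described.
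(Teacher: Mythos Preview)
There is a real gap, and it is precisely the one the paper's proof opens by flagging. Your $\U'$ has constants $v_1,\ldots,v_n$, while $\Q$ has constants $q_1,\ldots,q_n$; in general $v_i\neq q_i$, so $\U'$ is \emph{not} a substructure of $\Q$ and hence not a ``copy of $\U$ in $\Q$'' in the paper's sense (copies are isomorphic substructures, and any substructure of a structure with constant symbols must interpret those constants identically). Your remark that the $q_i$ are ``merely cosmetic'' shows you noticed the mismatch, but the downstream uses of the lemma genuinely need the constants pinned. In Lemma~\ref{lem:n-constant-interpolation} and its application in, say, Lemma~\ref{lem:eN}, one fixes specific vertices $a,b$ witnessing a violated edge and then requires copies of arbitrary finite $2$-constant graphs inside $(V;E,a,b)$ whose constants are \emph{equal to} $a$ and $b$; a floating pair $v_1,v_2$ somewhere in $Q$ does not deliver this.

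The paper addresses exactly this: applying Lemma~\ref{lem:partitionedGraphRamseyLemma} to $\tilde{\U}$ does not keep the singleton parts singleton, so one cannot force the copy to sit on prescribed constants. The workaround is to strip off the constants, apply the partitioned lemma only to the remaining $2^n$-partitioned graph $\R$, and then perform $2^n$ further applications of Theorem~\ref{thm:ramsey} (for one-vertex structures) so that an auxiliary $k^n$-coloring of vertices --- encoding, for each vertex, the $n$-tuple of colors of its pairs with the constants --- becomes constant on each proper part. Only then are the constants re-attached with the correct adjacencies, yielding a $\Q$ in which every canonical copy necessarily carries the fixed constants $q_1,\ldots,q_n$. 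Your shortcut via a single call to Lemma~\ref{lem:partitionedGraphRamseyLemma} bypasses this extra vertex-coloring round, and that is the missing idea.
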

\begin{proof}
    Let $\tilde{\U}:=(U;F,\{u_1\},\ldots,\{u_n\}, U_1,\ldots,U_{2^n})$ be the
    partitioned graph associated with $\U$. We would
    like to use the partitioned graph Ramsey lemma (Lemma~\ref{lem:partitionedGraphRamseyLemma}) in order to
    obtain $\Q$; but we want the singleton sets $\{u_i\}$ of the
    partition to remain singletons, which is not guaranteed by that lemma.

    So consider the $2^n$-partitioned graph
    $\R:=(U\setminus\{u_1,\ldots,u_n\};F, U_1,\ldots,U_{2^n})$, and apply the partitioned
    graph Ramsey lemma to this graph to obtain a partitioned graph $\R^0$.

    Equip $\R^0$ with any linear order. Now consider the
    ordered $2^n$-partitioned graph ${\mathscr L}^1$ which has just
    one vertex, and whose first part contains this single vertex.
    Apply Theorem~\ref{thm:ramsey} in order to obtain an ordered partitioned graph $\R^1$ such that
    $\R^1\To (\R^0)^{{\mathscr L}^1}_{k^n}$.

    Next, consider the ordered $2^n$-partitioned graph ${\mathscr L}^2$ which has just
    one vertex, and whose second part contains this single vertex. Apply Theorem~\ref{thm:ramsey} in order to obtain
    an ordered
    partitioned graph $\R^2$ such that
    $\R^2\To (\R^1)^{{\mathscr L}^2}_{k^n}$.

    Repeat this procedure with the ordered
    $2^n$-partitioned graphs ${\mathscr L}^3,\ldots,{\mathscr L}^{2^n}$; ${\mathscr L}^i$ has its single vertex in its $i$-th part. We end up with an ordered partitioned graph
    $\R^{2^n}$. We now forget its order and denote the resulting structure by $\T=(T;C,T_1,\ldots,T_{2^n})$.

    $\T$ has the following property: Whenever we color its vertices with
    $k^n$ colors, then we find a copy of $\R^0$ in $\T$ such that the coloring is constant on each part of this copy.
    Hence, it has the property
    that if we color its two-element subsets and its vertices with
    $k$ and $k^n$ colors, respectively, then we find in it a copy of
    $\R$ on which the first coloring is canonical,
    and such that the color of the vertices depends only on the part the vertex lies in.

    Now consider the structure
    $\S:=(T\cup\{u_1,\ldots,u_n\};B,\{u_1\},\ldots,\{u_n\},T_1,\ldots,T_{2^n})$, where $B$ consists of the edges of $\T$, plus
    edges connecting the $u_i$ with the vertices of some parts $T_i$, depending on whether $u_i$ was in $\U$ connected to the vertices in $U_i$ or not.
    Clearly, $\S$ is the partitioned graph of the $n$-constant graph $\Q:=(T\cup\{u_1,\ldots,u_n\};B,u_1,\ldots,u_n)$.
    We claim that $\Q$ has the property we want to prove.
    Assume that we color the two-element subsets of $T\cup\{u_1,\ldots,u_n\}$ with $k$
    colors. We must find a copy of $\U$ in $\Q$ on which the coloring
    is canonical. Divide the coloring into two colorings, namely the coloring
    restricted to two-element subsets of $T$, and the coloring of two-element subsets which contain at least one element $u_i$
    outside $T$. The color of the sets $\{u_i,u_j\}$ completely outside $T$ is irrelevant for what we want to prove, so forget about these.

    Now the coloring of those sets which have exactly one element outside $T$ can be encoded in a coloring of the vertices of
    $T$: Each vertex is given one of $k^n$ colors, depending on the colors of its edges leading to
    $u_1,\ldots,u_n$. So we have encoded the original coloring into a
    coloring of two-elements subsets of $T$ and a coloring of the vertices of $T$.
    With our observation above, this proves the lemma.
\end{proof}

\section{Finding structure in mappings on the random graph}

In this section we show how to use the Ramsey-theoretic results from
the last section in our context. To warm up, we prove a simple
observation (Proposition~\ref{prop:interpolation}) applying
Corollary~\ref{cor:coloringEdgesAndNonEdgesOfG}. The proposition
states that any mapping on the random graph behaves quite simple on
arbitrarily large finite subgraphs.

\begin{defn}
    Let $e,f: V\To V$. We say that $e$ \emph{behaves as $f$ on $F \subseteq V$} iff there is
    an automorphism $\alpha$ of $G$ such that $f(x)=\alpha(e(x))$ for
    all $x \in F$. We say that \emph{$e$ interpolates $f$ modulo
    automorphisms} iff for every finite $F \subseteq V$ there is an
    automorphism $\beta$ of $G$ such that $e(\beta(x))$ behaves as $f$
    on $F$; so this is the case iff there exist automorphisms
    $\alpha,\beta$ such that $\alpha(e(\beta(x))=f(x)$ for all $x\in
    F$.
\end{defn}

Note that if $e$ interpolates $f$ modulo automorphisms, then it also
generates $f$. We now want to make precise what it means that
arbitrarily large structures have a certain property.

\begin{defn}
    Let $\tau$ be any signature and let $\cal C(\tau)$ be a class of finite $\tau$-structures closed under substructures and with the property that for any
    two structures in $\cal C(\tau)$ there exists a structure in $\C(\tau)$ containing both structures.
    We order $\cal C(\tau)$ by the embedding
    relation $\subseteq$. Let $P(w)$ be any property. We say that \emph{$P$ holds for arbitrarily large
    elements of $\cal C(\tau)$} iff for any $\F\in\cal C(\tau)$ there exists $\H\in\cal C(\tau)$  such that $\F\subseteq\H$ and $P(\H)$
    holds. We say that \emph{$P$ holds for all sufficiently large
    elements of $\cal C(\tau)$} iff there is an element $\F$
    of $\cal C(\tau)$ such that $P$ holds for $\H$ whenever $\F$ embeds
    into $\H$.
\end{defn}

Our properties $P(w)$ will be such that if $P(\H)$ holds, then $P$
also holds for all substructures of $\H$. The definition then says
that $P$ holds for arbitrarily large elements of $\cal C(\tau)$ iff
for any $\F\in\cal C(\tau)$ there is $\F'\in\cal C(\tau)$ isomorphic
to $\F$ such that $P(\F')$ holds.

Observe also that if arbitrarily large structures in $\cal C(\tau)$
have one of finitely many properties, then one property holds for
arbitrarily large elements of $\cal C(\tau)$.

\begin{proposition}\label{prop:interpolation}
Let $e:V \rightarrow V$ be a mapping on the random graph. Then $e$
interpolates either the identity, $e_E$, $e_N$, a constant function,
or $-$ modulo automorphisms.
\end{proposition}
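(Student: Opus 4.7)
The plan is to apply Corollary~\ref{cor:coloringEdgesAndNonEdgesOfG} to a three-valued coloring of edges and a three-valued coloring of non-edges of $G$ that records how $e$ treats each pair, and then perform a case analysis on the resulting canonical copy. Once I have shown that for every finite $F\subseteq V$ at least one of the five target operations is interpolated on $F$, a finite pigeonhole over $F$ pins down a single target that works uniformly on all finite sets.

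Fix a finite $F\subseteq V$. I would first extend $F$ to a finite induced subgraph $\H$ of $G$ that contains both an edge and a non-edge and has the following \emph{richness property}: for every two distinct vertices $x,y\in\H$ there exist $w_E,w_N\in\H\setminus\{x,y\}$ such that $w_E$ is adjacent in $G$ to both $x$ and $y$, while $w_N$ is adjacent to neither. The extension property of the random graph makes such an $\H$ easy to build inside $G$. Next, I color every edge $\{x,y\}$ of $G$ by one of three colors according to whether $e(x)=e(y)$, $E(e(x),e(y))$, or $N(e(x),e(y))$ holds, and color the non-edges of $G$ independently by the analogous three-valued scheme. Corollary~\ref{cor:coloringEdgesAndNonEdgesOfG} then furnishes an isomorphic copy $\H'$ of $\H$ in $G$ on which both colorings are constant, and by homogeneity of $G$ I pick $\beta\in\Aut(G)$ sending $F$ onto its distinguished copy inside $\H'$.

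A priori this leaves nine cases, one for each pair of constant colors on edges and non-edges of $\H'$. The main technical step is to rule out the four ``mixed collapse'' cases, in which exactly one of the two colors is ``collapse''. Each is killed by the richness property: for instance, if edges of $\H'$ were sent to edges but non-edges were collapsed, then taking an edge $\{u,v\}$ of $\H'$ together with any common non-neighbor $w\in\H'$ would give $e(u)=e(w)=e(v)$, contradicting $E(e(u),e(v))$. The five surviving cases correspond precisely to the five target operations: both colors ``edge'' (so $e$ maps $\H'$ injectively onto a clique, matching $e_E$); edges to edges and non-edges to non-edges (matching the identity); edges to non-edges and non-edges to edges (matching $-$); both colors ``non-edge'' (matching $e_N$); and both colors ``collapse'' (a constant function). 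In each case, applying the homogeneity of $G$ to compare the images of $F$ under $e\circ\beta$ and under the relevant target $f$ produces $\alpha\in\Aut(G)$ with $\alpha\circ e\circ\beta=f$ on $F$.

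To finish, for every finite $F\subseteq V$ let $S(F)\subseteq\{\mathrm{id},e_E,e_N,-,\text{constant}\}$ be the set of targets for which the above procedure succeeds on $F$; the previous paragraph shows $S(F)\neq\emptyset$. Since $S(F')\subseteq S(F)$ whenever $F\subseteq F'$ and the ambient target set has only five elements, if no target lay in every $S(F)$ then picking one finite witness set per target and taking their union would give a finite $F$ with $S(F)=\emptyset$, a contradiction. Hence some target is interpolated by $e$ modulo automorphisms on every finite $F$. The real obstacle is the choice of $\H$: it has to be rich enough that the partial-collapse colorings cannot survive on any isomorphic copy, yet still a finite graph to which Corollary~\ref{cor:coloringEdgesAndNonEdgesOfG} applies — which is precisely the trade-off the richness property is designed to balance.
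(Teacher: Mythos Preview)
Your proof is correct and follows essentially the same approach as the paper: the same three-valued coloring of pairs, the same application of Corollary~\ref{cor:coloringEdgesAndNonEdgesOfG}, the same case analysis, and the same pigeonhole over the five targets. The only cosmetic difference is that the paper rules out the four mixed-collapse cases with the slightly weaker requirement that $\H$ contain the two three-vertex graphs with one and with two edges, rather than your richness property demanding a common neighbor and common non-neighbor for every pair.
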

\begin{proof}

We show that arbitrarily large finite subgraphs of $G$ have the
property that $e$ behaves on them like one of the operations of the
proposition. Since there are finitely many operations to choose
from, $e$ then behaves like one fixed operation $p$ from the list on
arbitrarily large finite subgraphs of the random graph. By the
homogeneity of the random graph, we can freely move finite graphs
around by automorphisms, proving that $e$ interpolates $p$.

So let $\F$ be any finite graph; we have to find a copy $\F'$ of
$\F$ in $G$ such that $e$ behaves like one of the mentioned
operations on this copy.

We color all pairs $\{x,y\}$ of distinct vertices of $G$
\begin{itemize}
\item by $1$ if $e(x)=e(y)$,
\item by $2$ if $E(e(x),e(y))$,
\item by $3$ if $N(e(x),e(y))$.
\end{itemize}

By Corollary~\ref{cor:coloringEdgesAndNonEdgesOfG} there exists a
copy $\F'$ of $\F$ in $G$ such that all edges and all non-edges of
$\F'$ have the same color $\chi_E$ and $\chi_N$, respectively. If
$(\chi_E,\chi_N)=(1,1)$, then $e$ behaves like the constant function
on $F'$. If $(\chi_E,\chi_N)=(2,3)$, then it behaves like the
identity, and if $(\chi_E,\chi_N)=(3,2)$, then $e$ behaves like $-$.
If $(\chi_E,\chi_N)=(2,2)$ or $(\chi_E,\chi_N)=(3,3)$, then $e$
behaves like $e_E$ or $e_N$, respectively. Finally, it is easy to
see that $(\chi_E,\chi_N)=(1,q)$ or $(\chi_E,\chi_N)=(q,1)$, where
$q\in\{2,3\}$, is impossible if $\F$ contains the two three-element
graphs with one and two edges, respectively.

\end{proof}

\begin{defn}\label{def:canonical}
    Let $\U=(U;F)$ be a graph, and let $f: U\To U$. Let $S_1,S_2$ be disjoint subsets of $U$.
    We say that $f$ is \emph{canonical on $S_1$} iff it behaves the same way on all edges and on all non-edges,
    respectively: This is to say that if $f$ collapses one edge in
    $S_1$, then it collapses all edges; if it makes an edge a
    non-edge, then it does so for all edges; etc.
    Similarly, we say that $f$ is \emph{canonical between $S_1$ and $S_2$} iff the same holds for all edges and non-edges between $S_1$ and $S_2$.
\end{defn}

We will often view $\U$ as a subgraph of the random graph, and $f$
will be injective. In this situation, $f$ is canonical on $S_1$ and
between $S_1,S_2$ iff it behaves like the identity, $-$, $e_E$, or
$e_N$ on $S_1$ and between $S_1,S_2$, respectively. Observe that
what we really proved in Proposition~\ref{prop:interpolation} is
that any $e: V\To V$ is canonical on arbitrarily large subgraphs of
the random graph.

\begin{defn}
    Let $\U=(U;F,U_1,\ldots,U_n)$ be a partitioned graph, and let $f: U\To U$.
    We say that $f$ is \emph{canonical on $\U$} iff it is canonical on all $U_i$ and between all distinct $U_i,U_j$.
    If $\U=(U;F,u_1,\ldots,u_n)$ is an $n$-constant graph, and $f: U\To
    U$, then $f$ is \emph{canonical on $\U$} iff it is canonical on the corresponding $n+2^n$-partitioned graph.
\end{defn}

\begin{defn}
    We call a countable structure \emph{$\aleph_0$-universal} iff it embeds all finite structures of the same signature.
\end{defn}

\begin{lem}[The $n$-partite graph interpolation lemma]\label{lem:n-partite-interpolation}
        Let $\U=(U;C,U_1,\ldots,U_n)$ be an $\aleph_0$-universal partitioned graph, and let $f: U\To U$.
        Then every finite partitioned graph has a copy in $\U$
        on which $f$ is canonical.
\end{lem}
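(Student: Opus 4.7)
The plan is to reduce the lemma to the $n$-partitioned graph Ramsey lemma (Lemma~\ref{lem:partitionedGraphRamseyLemma}), by encoding the behavior of $f$ on an unordered pair as one of three colors, in complete analogy with the proof of Proposition~\ref{prop:interpolation}. Given a finite partitioned graph $\P$ that we wish to copy canonically into $\U$, I would first invoke Lemma~\ref{lem:partitionedGraphRamseyLemma} with $k=3$ to obtain a finite $n$-partitioned graph $\Q$ such that any $3$-coloring of the two-element subsets of $Q$ admits a copy of $\P$ in $\Q$ on which the coloring is canonical. Since $\U$ is $\aleph_0$-universal, $\Q$ embeds into $\U$ as a partitioned graph, and from now on I identify $\Q$ with its image inside $\U$.

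Next I would define a $3$-coloring $\chi$ of the two-element subsets $\{x,y\}$ of $Q$ by setting $\chi(\{x,y\})=1$ if $f(x)=f(y)$, $\chi(\{x,y\})=2$ if $\{f(x),f(y)\}$ is an edge of $\U$, and $\chi(\{x,y\})=3$ if $\{f(x),f(y)\}$ is a non-edge of $\U$; these three cases are mutually exclusive and exhaustive. The Ramsey property of $\Q$ then yields a copy $\P'$ of $\P$ inside $\Q$ on which $\chi$ is canonical: within each part of $\P'$ and between any two distinct parts of $\P'$, the edges share a common color and the non-edges share a common color.

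To finish, I would translate canonicality of $\chi$ into canonicality of $f$ in the sense of Definition~\ref{def:canonical}. For any part $P_i'$ of $\P'$, the statement that all edges of $P_i'$ receive one common color means that either $f$ collapses every edge of $P_i'$ (color $1$), maps every edge to an edge (color $2$), or maps every edge to a non-edge (color $3$); the same trichotomy holds for the non-edges of $P_i'$, and likewise for the edges and non-edges between $P_i'$ and $P_j'$ for every $i \ne j$. This is precisely what it means for $f$ to be canonical on the partitioned graph $\P'$.

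Since the argument is essentially a direct application of the Ramsey lemma, I do not expect any serious obstacle: one only has to check that three colors suffice to record every way in which $f$ can act on a pair of distinct vertices, and that the notion of a canonical coloring used in Lemma~\ref{lem:partitionedGraphRamseyLemma} agrees on the nose with the notion of a canonical function in Definition~\ref{def:canonical}. Both points are immediate from the definitions, and the whole proof is of the same flavor as Proposition~\ref{prop:interpolation}, only with the plain Ramsey statement replaced by its partitioned strengthening.
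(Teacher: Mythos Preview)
Your proposal is correct and follows exactly the approach sketched in the paper's own proof: apply Lemma~\ref{lem:partitionedGraphRamseyLemma} with $k=3$, embed the resulting $\Q$ into $\U$ by $\aleph_0$-universality, and color pairs according to whether $f$ collapses them, sends them to an edge, or sends them to a non-edge --- precisely as in the proof of Proposition~\ref{prop:interpolation}. You have simply written out the details that the paper leaves to the reader.
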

\begin{proof}
    This is immediate from the $n$-partitioned graph Ramsey lemma (Lemma~\ref{lem:partitionedGraphRamseyLemma}):
    Just like in the proof of Proposition~\ref{prop:interpolation}, we color the edges and non-edges of $\U$ according to what $f$
    does to them.
\end{proof}

\begin{lem}[The $n$-constant graph interpolation lemma]\label{lem:n-constant-interpolation}
        Let $\U=(U;C,u_1,\ldots,u_n)$ be an $\aleph_0$-universal $n$-constant graph, and let $f: U\To U$.
        Then every finite $n$-constant graph has a copy in $\U$
        on which $f$ is canonical.
\end{lem}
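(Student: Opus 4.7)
The plan is to mirror the proof of Lemma~\ref{lem:n-partite-interpolation}, replacing the partitioned graph Ramsey lemma by its $n$-constant counterpart (Lemma~\ref{lem:constantGraphRamseyLemma}). Let $\F$ be an arbitrary finite $n$-constant graph; I must exhibit a copy of $\F$ in $\U$ on which $f$ is canonical.

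First, I apply Lemma~\ref{lem:constantGraphRamseyLemma} to $\F$ with $k=3$ to obtain a finite $n$-constant graph $\Q=(Q;D,q_1,\ldots,q_n)$ with the Ramsey property that every $3$-coloring of the two-element subsets of $Q$ admits a canonical copy of $\F$. Because $\U$ is $\aleph_0$-universal as an $n$-constant graph, $\Q$ embeds into $\U$; since embeddings of $n$-constant graphs must map the distinguished vertices to the distinguished vertices, the resulting copy $\Q'$ of $\Q$ in $\U$ has its constants sent to $u_1,\ldots,u_n$.

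Next, in the style of Proposition~\ref{prop:interpolation}, I color the two-element subsets $\{x,y\}$ of $\Q'$ with three colors, depending on whether $f(x)=f(y)$, whether $\{f(x),f(y)\}$ is an edge of $\U$, or whether it is a non-edge of $\U$. The Ramsey property of $\Q$ then provides a copy $\F'$ of $\F$ inside $\Q'$ on which this coloring is canonical. Unfolding the definition of a canonical coloring via the associated $(n+2^n)$-partitioned graph of $\F'$, this says exactly that the action of $f$ on the edges (resp.\ non-edges) inside every proper part of $\F'$, and on the edges (resp.\ non-edges) between every two distinct parts, is uniform; that is, $f$ is canonical on $\F'$, as required.

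The only point requiring a little care is the bookkeeping that the notion of copy for $n$-constant graphs preserves distinguished vertices, so that in the passage to the partitioned representation the singleton parts $\{u_i\}$ stay singletons and the canonical condition on the two-element subsets of $\F'$ genuinely translates into $f$ being canonical on $\F'$. I do not anticipate any genuine obstacle: the combinatorial content is already packaged into Lemma~\ref{lem:constantGraphRamseyLemma}, and the proof consists essentially in verifying that the three-coloring of pairs induced by the behavior of $f$ is precisely the object that lemma is designed to handle.
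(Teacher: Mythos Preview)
Your proposal is correct and follows exactly the approach the paper intends: the paper's own proof is the single sentence ``This is immediate from the $n$-constant graph Ramsey lemma (Lemma~\ref{lem:constantGraphRamseyLemma}),'' and your argument simply unpacks what that means---embed the Ramsey witness $\Q$ into $\U$ by $\aleph_0$-universality, color pairs by the behavior of $f$ as in Proposition~\ref{prop:interpolation}, and read off a canonical copy of $\F$. Your attention to the fact that embeddings of $n$-constant graphs fix the distinguished vertices is exactly the bookkeeping the paper leaves implicit.
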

\begin{proof}
    This is immediate from the $n$-constant graph Ramsey lemma (Lemma~\ref{lem:constantGraphRamseyLemma}).
\end{proof}

\section{Proof of the Main theorem}

We will apply Lemma~\ref{lem:n-constant-interpolation} to prove

\begin{lem}\label{lem:eN}
    Let $e: V\To V$ be so that it preserves $N$ but not $E$. Then $e$ generates $e_N$.
\end{lem}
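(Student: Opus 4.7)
The plan is to show that $e$ interpolates $e_N$ modulo automorphisms, which by the remark after the definition of interpolation implies that $e$ generates $e_N$. By Proposition~\ref{prop:interpolation}, $e$ interpolates one of the operations identity, $-$, $e_E$, $e_N$, or a constant operation. The operations $-$, $e_E$, and any constant operation all fail to preserve $N$ (each sends some non-edge to an edge, an edge, or an equal pair, respectively). Since $e$ preserves $N$, the only canonical behaviors available to $e$ on sufficiently large finite subgraphs are the identity and $e_N$.

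Since $e$ fails to preserve $E$, there is an edge $(x_0,y_0)$ with $\neg E(e(x_0),e(y_0))$. A brief ``triangle'' argument lets us arrange $e(x_0)\neq e(y_0)$: if $e(x_0)=e(y_0)$, pick a vertex $c$ with $E(x_0,c)$ and $N(y_0,c)$ (such $c$ exists by the extension property); then $N$-preservation gives $N(e(y_0),e(c))$, which combined with $e(x_0)=e(y_0)$ yields $N(e(x_0),e(c))$, so $(x_0,c)$ is a better witness. Henceforth we assume $N(e(x_0),e(y_0))$.

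Now, given any finite graph $F$, we apply the $2$-constant graph interpolation lemma (Lemma~\ref{lem:n-constant-interpolation}) to the $2$-constant graph $(V;E,x_0,y_0)$ with a carefully chosen finite $2$-constant graph $\U$ whose constants $a,b$ are adjacent and whose remaining vertices realize a copy of $F$, placed in a proper part of $\U$ chosen to simplify the interaction with the constants. The lemma furnishes a copy of $\U$ on which $e$ is canonical, with $a\mapsto x_0$ and $b\mapsto y_0$; in particular $F$ appears as a subgraph $F'\subseteq V$, and the canonical behavior of $e$ on the proper part containing $F'$ is, by the reduction above, either the identity or $e_N$. If $e_N$, then $e(F')$ is an independent set of size $|F|$, so $e$ interpolates $e_N$ on $F$ and we are done.

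The \textbf{main obstacle} is ruling out the ``identity'' case. The difficulty is that Lemma~\ref{lem:n-constant-interpolation} provides per-part canonicity rather than a single global canonical behavior, so one cannot immediately transfer the failure $\neg E(e(x_0),e(y_0))$ into a contradiction with identity behavior on $F'$. To force the contradiction, one exploits that identity behavior on a proper part implies that $e$ is injective on that part, which rules out the ``edges-to-same-point'' option for the canonical between-part behaviors. A case analysis on the remaining between-part options, combined with the fixed non-edge value at the constants and an appropriate choice of the adjacency pattern of $F$ to $a,b$, is then expected to force $e_N$-behavior on some sufficiently large proper part, yielding the desired copy of $F$ and completing the proof.
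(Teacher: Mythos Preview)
Your plan to show that $e$ \emph{interpolates} $e_N$ modulo automorphisms is too strong and in general false. Consider the graph $H$ obtained from $G$ by deleting a single edge $\{x_0,y_0\}$; since $H$ still has the extension property, there is an isomorphism $\psi:H\to G$. Viewed as a map $e:=\psi:V\to V$, this $e$ preserves $N$ (non-edges of $G$ are non-edges of $H$, hence go to non-edges of $G$) and violates $E$ at exactly the pair $(x_0,y_0)$ while preserving all other edges. For the triangle $K_3$, any composite $\alpha\circ e\circ\beta$ with $\alpha,\beta\in\Aut(G)$ destroys at most one edge, so $e$ does \emph{not} interpolate $e_N$ on $K_3$. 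Correspondingly, in your 2-constant setup the ``identity'' case you flag as the main obstacle is not an obstacle to be ruled out---it genuinely occurs: $e$ may behave like the identity on and between all proper parts, with the single edge between the constants the only thing changed. No choice of adjacency pattern of $F$ to $a,b$ and no case analysis on between-part behaviors can manufacture $e_N$-behavior on a large proper part in this situation.

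The missing idea is iteration. Since $e$ preserves $N$, on any canonical copy it never adds an edge; and the edge between the two constants is definitely deleted. Thus, given any finite graph, pick an edge, realize the graph as a 2-constant graph with that edge between the constants, find a canonical copy in $(V;E,x_0,y_0)$, and apply $e$: the chosen edge disappears and no new edges appear (possibly further edges disappear, which only helps). Repeating at most $|F|^2$ times removes all edges, showing that the monoid generated by $e$ contains an operation behaving like $e_N$ on $F$. This is the argument the paper gives; your proposal correctly sets up the 2-constant framework and the triangle argument for $N(e(x_0),e(y_0))$, but the final step must be this edge-by-edge deletion rather than a one-shot interpolation.
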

\begin{proof}
    We prove that for every finite subset $F$ of $V$,
    $e$ produces an operation which behaves like $e_N$ on $F$. We
    first claim that there are adjacent vertices $a,b\in V$ such that $N(e(a),e(b))$.
     Since $e$ does not preserve $E$, there exist $u,v$ with
    $(u,v)\in E$ such that $(e(u),e(v))\nin E$. If $N(e(u),e(v))$, then we are done.
    If $e(u)=e(v)$, then choose $w$ such that $E(w,u)$ and $N(w,v)$. We have $(e(w),e(u))=(e(w),e(v))\in N$, so $u,w$ prove the claim.

    Now, $\U:=(V;E,a,b)$ is an $\aleph_0$-universal 2-constant graph. Therefore, by
    Lemma~\ref{lem:n-constant-interpolation}, $e$ is canonical on
    arbitrarily large substructures of $\U$. Since $e$ preserves
    $N$, it is easy to see that if $e$ is canonical on a $2$-constant
    graph which is large enough, then  $e$ must be injective. (For
    example, if $e$ is canonical on a graph which contains the three-element graph with two edges, then
    $e$ cannot collapse any edges of that graph.) Hence, $e$ is canonical
    and injective on arbitrarily large $2$-constant subgraphs of
    $\U$. Since $e$ preserves $N$, we have that for arbitrarily large substructures of
    $\U$, it behaves like the identity or like $e_N$ on and between
    the parts of these structures; in particular, it does not add any
    edges. Hence, for any finite $2$-constant graph, we can delete
    the edge between the two constants without adding any other
    edges. But that means that starting from any finite graph, we
    can delete all edges by repeating this process, choosing any
    edge we want to get rid of in each step. This proves the lemma.
\end{proof}

The following is just the dual statement.

\begin{cor}\label{lem:eE}
    Let $e: V\To V$ be so that it preserves $E$ but not $N$. Then $e$ generates $e_E$.
\end{cor}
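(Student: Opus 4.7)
The plan is to mirror the proof of Lemma~\ref{lem:eN}, swapping the roles of $E$ and $N$ throughout. First I would locate a non-edge on which $e$ produces an edge: since $e$ does not preserve $N$, pick $u,v\in V$ with $N(u,v)$ and $\neg N(e(u),e(v))$; if $E(e(u),e(v))$ already holds, set $a:=u$, $b:=v$; otherwise $e(u)=e(v)$, and then choose $w$ with $N(w,u)$ and $E(w,v)$, so that by $E$-preservation $(e(w),e(u))=(e(w),e(v))\in E$, and the pair $a:=w$, $b:=u$ is a non-edge that $e$ sends to an edge.

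Next, I would regard the random graph as the $\aleph_0$-universal $2$-constant graph $\U:=(V;E,a,b)$ and apply Lemma~\ref{lem:n-constant-interpolation} to conclude that $e$ is canonical on arbitrarily large finite $2$-constant subgraphs of $\U$. On any sufficiently large such subgraph, $E$-preservation forces $e$ to be injective: were $e$ to collapse a non-edge within some part (or between two parts), canonicity would force it to collapse all non-edges there, and then a three-vertex subgraph consisting of one edge $\{x,y\}$ and two non-edges $\{x,z\},\{y,z\}$ would yield $e(x)=e(z)=e(y)$, contradicting $E(x,y)\Rightarrow E(e(x),e(y))$. So on those large canonical substructures $e$ acts like the identity or like $e_E$ on each part and between each pair of parts; in particular it introduces no new non-edges.

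Finally, the unique pair lying between the two singleton parts $\{a\}$ and $\{b\}$ is a non-edge, and it is mapped by $e$ to an edge. Hence every finite $2$-constant graph whose two constants are non-adjacent admits a copy in $\U$ on which $e$ turns the non-edge between its constants into an edge while creating no further non-edges. Iterating this move---picking at each step any non-edge we wish to remove and using an automorphism of $G$ to reposition---transforms any prescribed finite graph into a complete graph, which shows that $e$ generates $e_E$. The only mildly delicate point is the initial case split locating the pair $(a,b)$; beyond that, the argument is exactly dual to the proof of Lemma~\ref{lem:eN}.
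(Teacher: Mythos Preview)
Your argument is correct and is precisely the dualization the paper has in mind; the paper itself does not spell out a separate proof but simply remarks that the statement is dual to Lemma~\ref{lem:eN}. Your write-up in fact provides more detail than the original proof of Lemma~\ref{lem:eN} (for instance, the injectivity argument across parts), so there is nothing to add.
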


\begin{lem}\label{lem:constant}
    Let $e: V\To V$ be so that it preserves neither $E$ nor $N$. If $e$ is not injective, then $e$ generates a constant operation.
\end{lem}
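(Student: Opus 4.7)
The plan is to apply the $n$-constant graph interpolation lemma (Lemma~\ref{lem:n-constant-interpolation}) with a pair of collapsed points as constants, and then to propagate the identification $e(a)=e(b)$ to constancy on a large subset of $V$ via a triangle-cascade argument in the spirit of Proposition~\ref{prop:interpolation}. Closure of the monoid then delivers the desired constant operation.

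Concretely: by non-injectivity of $e$, I first fix $a\neq b$ with $e(a)=e(b)=:p$. By the symmetry between $E$ and $N$ (via the isomorphism $-$ from $G$ to its complement), I may assume $E(a,b)$. I then apply Lemma~\ref{lem:n-constant-interpolation} to the $2$-constant graph $(V;E,a,b)$: for any prescribed finite $2$-constant graph of matching type we get an isomorphic copy inside $(V;E,a,b)$ on which $e$ is canonical, and in particular arbitrarily large canonical substructures $(U;E,a,b)$. The vertex set $U\setminus\{a,b\}$ decomposes into the four proper parts $U_{EE},U_{EN},U_{NE},U_{NN}$ indexed by type with respect to $a,b$. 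Canonicity makes $e$'s behavior on each pair-type uniform, and the identification $e(a)=e(b)=p$ forces the uniform image-type $\sigma(\{a\},U_{XY})$ of pairs in $\{a\}\times U_{XY}$ under $e$ to coincide with $\sigma(\{b\},U_{XY})$, even on $U_{EN}$ and $U_{NE}$, where $\{a\}\times U_{XY}$ and $\{b\}\times U_{XY}$ consist of edges and non-edges respectively.

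The heart of the argument is to establish that some canonical image-type under $e$ on the substructure must be a collapse, i.e., equal to ``$=$''. Given such a collapse, one obtains constancy on a large subset of $V$: a collapse on $\{a\}\times U_{XY}$ gives $e\equiv p$ on $U_{XY}\cup\{a,b\}$ directly; a canonical collapse of edges or non-edges inside a proper part gives the ``constant'' canonical behavior on that part by the triangle argument of Proposition~\ref{prop:interpolation}; and a canonical collapse on a pair-type between two large proper parts forces $e$ constant on one of them via the common-neighbor property of the random graph in a sufficiently rich part, following the argument already used in the proof of Lemma~\ref{lem:eN}. In each case, arranging the canonical substructure via the interpolation lemma so that the collapsing part embeds arbitrary finite graphs shows that $e$ is constant on arbitrarily large finite subgraphs of $G$; closure of the monoid then produces a constant operation in the monoid generated by $e$.

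The main obstacle is ruling out the residual case in which no canonical image-type is a collapse: every $\sigma$ lies in $\{E,N\}$ and every canonical behavior on a proper part is one of the four injective types $\mathrm{id},-,e_E,e_N$. Here the plan is to refine the choice of constants by additionally including witnesses of the global failures of $E$- and $N$-preservation (an edge and a non-edge on which $e$ fails to preserve the corresponding relation), to re-apply Lemma~\ref{lem:n-constant-interpolation}, and to use the resulting further cross-constraints together with the triangle cascade of Proposition~\ref{prop:interpolation} (a collapsed edge forces a collapsed non-edge along any path, and vice versa) to show that no such configuration can consistently avoid producing a collapse image-type somewhere. The bookkeeping in this last case analysis is expected to be the most delicate step of the proof.
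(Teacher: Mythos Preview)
Your plan is far more elaborate than what is needed, and the part you yourself flag as ``the most delicate step'' is not carried out: the residual case in which all canonical image-types lie in $\{E,N\}$ is only sketched, and it is not clear the proposed refinement by adding further constants actually terminates in a contradiction without essentially reproving the lemma inside that case analysis. (A minor point: your ``by symmetry via $-$'' reduction to $E(a,b)$ is not justified as stated, since $-$ is not assumed to lie in the monoid generated by $e$; the symmetry is of course there, but it is a symmetry of the argument, not an application of $-$.)

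The paper's proof avoids the interpolation lemma entirely and is almost immediate. The key observation you are missing is that one does not need any canonical behavior: it suffices to produce, in the monoid generated by $e$, one operation $g$ that collapses \emph{some} edge and one operation $h$ that collapses \emph{some} non-edge. Non-injectivity of $e$ gives one of these directly (say $g:=e$ collapses an edge). If $e$ does not already collapse a non-edge, then the hypothesis that $e$ violates $N$ means some non-edge is sent to an edge; composing with an automorphism that carries this edge to the edge collapsed by $g$, and applying $g$ again, yields $h$. Once $g$ and $h$ are in hand, any finite $F\subseteq V$ is collapsed to a point in at most $|F|-1$ steps: pick two vertices of the current image, move them by an automorphism onto the edge or non-edge collapsed by $g$ or $h$ as appropriate, apply, and repeat. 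This three-line argument replaces the entire canonicity machinery in your proposal.
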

\begin{proof}
    We must show that for any finite subset $F$ of $V$, $e$ generates an operation which is constant on $F$.

    Observe that $e$ generates operations $g,h$ which collapse an edge
    and a non-edge, respectively. To see this, note that since
    $e$ is not injective, it collapses an edge or a non-edge; say
    without loss of generality it collapses an edge, so we can set
    $g:=e$. If it also collapses a non-edge, then we are done.
    Otherwise, since $e$ violates $N$, it sends some non-edge to an
    edge, which, with the help of an appropriate automorphism, can
    be collapsed by another application of $e$.

    Having this, once proceeds inductively to collapse all the vertices of
    $F$, shifting $F$ around with automorphisms accordingly and applying $g$ and
    $h$. After at most $|F|$ steps, the whole of $F$ is collapsed to a
    single vertex.
\end{proof}

The following theorem states that there exist five functions on the
random graph which are minimal in the sense that every function
which is not an automorphism of $G$ generates one of these five
functions.

\begin{theorem}\label{thm:endos-weak}
Let $\Gamma$ be first-order definable in the random graph. Then one
of the following cases applies.
\begin{enumerate}
\item $\Gamma$ has a constant endomorphism.
\item $\Gamma$ has $e_E$ as an endomorphism.
\item $\Gamma$ has $e_N$ as an endomorphism.
\item $\Gamma$ has $-$ as an automorphism.
\item $\Gamma$ has {\it sw} as an automorphism.
\item All endomorphisms of $\Gamma$ are locally generated by the automorphisms of the random graph.
\end{enumerate}
\end{theorem}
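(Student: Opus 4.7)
The plan is to analyze any endomorphism $e$ of $\Gamma$ and push it into one of the six cases. If every $e \in \End(\Gamma)$ ends up locally generated by $\Aut(G)$, we are in case~(6); otherwise, I pick a specific $e$ that is not so generated and show that $\End(\Gamma)$ (or $\Aut(\Gamma)$) must then contain one of the five distinguished operations, yielding cases~(1)--(5). Several cases dispatch immediately: if $e$ preserves both $E$ and $N$, $e$ is a self-embedding of $G$ and is locally generated by $\Aut(G)$ via the homogeneity of $G$. If $e$ preserves $N$ but not $E$, Lemma~\ref{lem:eN} gives case~(3); dually, Corollary~\ref{lem:eE} gives case~(2). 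If $e$ preserves neither and is not injective, Lemma~\ref{lem:constant} gives case~(1). The remaining case is: $e$ is injective and preserves neither $E$ nor $N$.

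For this case I would apply Proposition~\ref{prop:interpolation}. Injectivity excludes the constant option. If $e$ interpolates $e_E$ or $e_N$, these operations enter $\End(\Gamma)$, giving case~(2) or (3). If $e$ interpolates $-$, then $- \in \End(\Gamma)$; since $(-)^{-1}$ is also an isomorphism between $G$ and its complement, by the homogeneity of $G$ it generates the same monoid as $-$ together with $\Aut(G)$, so $(-)^{-1} \in \End(\Gamma)$ as well, making $-$ an automorphism of $\Gamma$ (case~(4)). What remains is the subtle sub-case where $e$ interpolates only the identity, despite having both an edge-defect $(a,b)$ (with $N(e(a),e(b))$) and a non-edge-defect $(c,d)$ (with $E(e(c),e(d))$).

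For this last sub-case I would invoke the $n$-constant graph interpolation lemma (Lemma~\ref{lem:n-constant-interpolation}) with one or two constants chosen at a defect, for instance the pair $a,b$. Because $e$ interpolates only the identity on $V$, the canonical behavior of $e$ on the non-constant parts of any resulting canonical subgraph must itself be identity; the only freedom lies in how $e$ acts between the constants and the other parts, and between the constants themselves. A case analysis of these remaining ``between'' behaviors should show that either the canonical behavior of $e$ matches that of the switch \sw\ at one of the constants on arbitrarily large subgraphs, giving case~(5) via the same $(-)^{-1}$-style homogeneity argument used for $-$; or it produces an injective operation that preserves $N$ but not $E$ (or dually $E$ but not $N$), in which case Lemma~\ref{lem:eN} (respectively Corollary~\ref{lem:eE}) pushes us back into case~(3) or (2). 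The main obstacle I anticipate is ensuring that the canonical behavior produced by the Ramsey lemma genuinely ``sees'' the defects, rather than having the Ramsey lemma choose copies that avoid the defect witnesses; handling this will likely require iterating the lemma with additional constants and using the homogeneity of $G$ to move defect witnesses into canonical copies (replacing $e$ by $e\circ\alpha$ for suitable $\alpha\in\Aut(G)$ without enlarging the generated monoid).
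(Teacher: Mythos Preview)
Your plan is essentially the paper's own argument: the same initial case split via Lemmas~\ref{lem:eN}, \ref{lem:constant} and Corollary~\ref{lem:eE}, the same reduction to an injective $e$ that interpolates only the identity, and the same use of the $2$-constant interpolation lemma with the constants placed at an edge-defect. There is, however, one real gap. You assert that once $e$ interpolates only the identity, ``the only freedom lies in how $e$ acts between the constants and the other parts, and between the constants themselves.'' That is not yet justified: knowing that $e$ behaves like the identity on each proper part $F_i'$ does \emph{not} force $e$ to behave like the identity \emph{between} two distinct proper parts $F_i', F_j'$. On a $2$-partitioned subgraph, $e$ may well behave like the identity on each part and like $-$, $e_E$, or $e_N$ between the parts; such a map is not canonical on the union viewed as an unpartitioned graph, so your hypothesis that $e$ interpolates only the identity yields no contradiction there. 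The paper inserts a preliminary $2$-partite step precisely for this: if $e$ is identity on both parts and $-$ (respectively $e_N$, $e_E$) between the parts of arbitrarily large $2$-partitioned subgraphs, then $e$ already generates $\sw$ (respectively $e_N$, $e_E$). Only after these possibilities are ruled out may one assume identity behaviour between proper parts and restrict attention to the constant-to-part and constant-to-constant edges. You need this intermediate step.

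Your anticipated obstacle---that the Ramsey copy might avoid the defect---is not a genuine difficulty in this setup: since the constants are \emph{themselves} the defect vertices $a,b$, every copy in the $\aleph_0$-universal $2$-constant graph $(V;E,a,b)$ contains them by definition, and no further iteration with additional constants is needed. The actual remaining work (once the between-proper-parts step is in place) is the case analysis of the constant-to-part behaviour at each single constant, which the paper carries out by passing to the $1$-constant reducts $(V;E,a)$ and $(V;E,b)$; there the three non-identity options (delete edges and keep non-edges; delete and add; keep and add) yield $e_N$, $\sw$, $e_E$ respectively, and the surviving identity option forces $e$ to delete exactly the edge $\{a,b\}$ on arbitrarily large subgraphs, whence $e$ generates $e_N$.
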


\begin{proof}

If $\Gamma$ has an endomorphism $e$ which preserves $E$ but not $N$
or $N$ but not $E$, then we can refer to Lemma~\ref{lem:eN} and
Corollary~\ref{lem:eE}. If all of its endomorphisms preserve both
$N$ and $E$, then they are all generated by the automorphisms of
$G$. We thus assume henceforth that $\Gamma$ has an endomorphism $e$
which violates both $E$ and $N$.

If $e$ is not injective, then it generates a constant operation, by
Lemma~\ref{lem:constant}. So suppose that $e$ is injective. Fix
distinct $x,y$ such that $E(x,y)$ and $N(e(x),e(y))$.

By Proposition~\ref{prop:interpolation}, $e$ is canonical on
arbitrarily large finite subgraphs of $G$. If $e$ interpolates $-$,
$e_E$, or $e_N$ modulo automorphisms, then we are done. So assume
this is not the case, i.e., there is a finite graph $\F_0$ with the
property that on all copies of $\F_0$ in $G$, $e$ does not behave
like any of these operations. Observe that $e$ then behaves like the
identity on arbitrarily large subgraphs of $G$. Moreover, this
assumption implies that if only a finite subgraph $\F$ of $G$ is
sufficiently large (i.e., if it embeds $\F_0$), and $e$ is canonical
on $\F$, then $e$ behaves like the identity on $\F$.

We now make a series of observations which rule out bad behavior of
$e$ between subsets of the random graph, and which follow from our
assumptions of the preceding paragraph; the easily verifiable
details are left to the reader.

\begin{itemize}
\item If $e$ behaves like $-$ between the parts of arbitrarily large
finite 2-partitioned subgraphs of $G$, then it generates $\sw$.

\item If $e$ behaves like $e_N$ between the parts of arbitrarily
large finite 2-partitioned subgraphs of $G$, then it generates
$e_N$.

\item If $e$ behaves like $e_E$ between the parts
 of arbitrarily large finite 2-partitioned subgraphs of $G$, then
 it generates $e_E$.
\end{itemize}

We assume therefore that for sufficiently large finite
$2$-partitioned subgraphs of $G$, if $e$ is canonical on such a
graph, then $e$ behaves like the identity on and between the parts.

Now observe that $\Q:=(V;E,x,y)$ is an $\aleph_0$-universal
2-constant graph. Let $\F=(F;D,f_1,f_2)$ be any finite 2-constant
graph. By the $n$-constant interpolation lemma
(Lemma~\ref{lem:n-constant-interpolation}), there is a copy $\F'$ of
$\F$ in $\Q$ on which $e$ is canonical. By our assumption above, if
only $\F$ is large enough, then being canonical on a proper part
$F_i'$ of the 6-partitioned graph
$\tilde{\F'}=(F';E,\{x\},\{y\},F_1',\ldots,F_4')$ corresponding to
$\F'$ means behaving like the identity thereon, and being canonical
between proper parts means behaving like the identity between these
parts. Therefore, all 2-constant graphs $\F$ have a a copy
$\F'=(F';E,x,y)$ in $\Q$ such that $e$ behaves like the identity on
and between all of the parts $F_i', F_j'$ of the corresponding
partitioned graph $\tilde{\F'}=(F';E,\{x\},\{y\},F'_1,\ldots,F'_4)$.

Of a two-constant graph $\F$, consider the reduct $\H=(F;D,f_1)$.
This reduct has a copy $\H'$ in $\Q^x=(V;E,x)$ on which $e$ is
canonical. The corresponding partitioned graph has two parts $H_1'$,
$H_2'$, and $x$ is connected to, say, all vertices in $H_1'$ and to
none in $H_2'$. Since $e$ is canonical on $\H'$, either all edges
leading to $H_1'$ are kept or deleted. Similarly with the non-edges
between $x$ and $H_2'$. If all edges are deleted and all non-edges
kept for arbitrarily large $\H$, then $e$ generates $e_N$. If all
edges are deleted and all non-edges edged for arbitrarily large
$\H$, then $e$ interpolates $\sw$ modulo automorphisms. If all edges
are kept and all non-edges edged for arbitrarily large $\H$, then
$e$ generates $e_E$. So we assume that if only $\H$ is large enough,
then all edges and non-edges are kept by $e$ on those copies of $\H$
on which $e$ is canonical.

We use the same argument with the reduct $(F;D,f_2)$ and
$\Q^y=(V;E,y)$, and arrive at the conclusion that if the
two-constant graph $\F$ is large enough, then on every copy of $\F$
in $\Q$ which $e$ is canonical on, the edges and non-edges leading
from $x$ and $y$ to the other vertices of the copy are kept.

Combining this with what we have established before, we conclude
that if only $\F$ is large enough, and $\F'$ is a copy of $\F$ in
$\Q$ which $e$ is canonical on, then $e$ behaves like the identity
on $\F'$ except between $x$ and $y$, where it deletes the edge.
Hence, for any finite $\F$ we can find a copy in $\Q$ on which $e$
behaves that way. But this implies that starting from any finite
graph $\S:=(F;D)$, we can pick any edge in $\S$, say between
vertices $f_1,f_2$, and then find a copy of $\F:=(F;D,f_1,f_2)$ in
$\Q$ such that $e$ deletes exactly that edge from the copy whithout
changing the rest. Hence, by shifting finite graphs around with
automorphisms, we can delete a single edge from an arbitrary finite
subgraph of $G$ without changing the rest of the graph. Applying
this successively, we can remove all edges from arbitrary finite
graphs, proving that $e$ generates $e_N$.
\end{proof}

Proving Theorem~\ref{thm:endos} now amounts to showing that if cases
(1),(2),(3), and (6) of Theorem~\ref{thm:endos-weak} do not apply
for a structure $\Gamma$, and hence if (4) or (5) of that theorem
hold, then its endomorphisms are generated by its automorphisms.
This will be accomplished in the three propositions to come.

\begin{proposition}\label{prop:above-minus}
Let $\Gamma$ be first-order definable in the random graph, and
suppose $\Gamma$ is preserved by $-$ but not by $e_N, e_E$, or a
constant operation. Then the endomorphisms of $\Gamma$ are locally
generated by $\{ -\} \cup \Aut(G)$, or $\Gamma$ is preserved by {\it
sw}.
\end{proposition}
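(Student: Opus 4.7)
The plan is to assume that $\Gamma$ has an endomorphism $e$ which does not lie in $\langle \{-\} \cup \Aut(G) \rangle$, and to show that then $\sw$ belongs to the closed monoid generated by $\{e,-\} \cup \Aut(G) \subseteq \End(\Gamma)$; equivalently, for every finite $F \subseteq V$ I want to find a composition of $e$, $-$, and automorphisms of $G$ that agrees with $\sw$ on $F$.

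First I would perform the by-now familiar reduction to the case where $e$ is injective and violates both $E$ and $N$. If $e$ is an embedding, then by the homogeneity of $G$ every finite restriction of $e$ extends to an element of $\Aut(G)$, so $e \in \langle \Aut(G) \rangle$, contradicting the choice of $e$. If $e$ preserves exactly one of $E, N$, Lemma~\ref{lem:eN} or Corollary~\ref{lem:eE} produces $e_N$ or $e_E$ as an endomorphism of $\Gamma$, excluded by hypothesis. If $e$ is non-injective and violates both, Lemma~\ref{lem:constant} produces a constant endomorphism, also excluded. Hence I may fix $x, y \in V$ with $E(x,y)$ and $N(e(x), e(y))$.

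Next, since $(V;E,x,y)$ is an $\aleph_0$-universal $2$-constant graph, the $n$-constant graph interpolation lemma (Lemma~\ref{lem:n-constant-interpolation}) tells us that $e$ is canonical on arbitrarily large finite $2$-constant subgraphs. On each of the four proper parts of the associated $6$-partitioned graph, as well as on every interface between two distinct parts (including the singletons $\{x\}$ and $\{y\}$), the behavior of $e$ is one of the five behaviors of Proposition~\ref{prop:interpolation}; but $e_E$- or $e_N$-behavior on any part or interface would generate $e_E$ or $e_N$ as an endomorphism of $\Gamma$, exactly as in the three bulleted observations in the proof of Theorem~\ref{thm:endos-weak}, which is excluded. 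So on each part and interface $e$ acts as identity or $-$, and by pigeonhole one fixed canonical signature is realized on arbitrarily large $2$-constant subgraphs.

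Finally, since $- \in \Aut(\Gamma)$, composing $e$ with $-$ on the left flips every identity/$-$ marker in the signature at once, and by homogeneity the roles of $x$ and $y$ can be exchanged by an automorphism of $G$; after using these symmetries to normalize, I would enumerate the remaining possible signatures. In each case, either (a) the normalized behavior forces $e$, after composition with suitable elements of $\langle\{-\}\cup\Aut(G)\rangle$, to be locally interpolated by automorphisms of $G$, whence $e \in \langle\{-\}\cup\Aut(G)\rangle$, contradicting the choice of $e$; or (b) by iterating $e$ together with $-$ and automorphisms of $G$ --- just as in the closing paragraph of the proof of Theorem~\ref{thm:endos-weak}, where a single edge was shown to be deletable from an arbitrary finite graph --- we produce, for every finite $F \subseteq V$ and every distinguished $v \in F$, an operation that flips all edges and non-edges between $v$ and $F \setminus \{v\}$ and acts as the identity on $F \setminus \{v\}$, which by homogeneity of $G$ interpolates $\sw$ modulo automorphisms. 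The principal obstacle is exactly this final case analysis: one must systematically track how the single flipped edge between $x$ and $y$ propagates, after composition with $-$ and automorphisms of $G$, into a full switch of the star at a single vertex of $F$, and verify that the ``good'' signatures indeed realize this propagation while all other signatures trap $e$ inside $\langle\{-\}\cup\Aut(G)\rangle$.
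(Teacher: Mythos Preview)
Your reduction to an injective $e$ violating both $E$ and $N$, and your use of the $2$-constant interpolation lemma on $(V;E,x,y)$, are both sound; the paper nominally works with an $n$-constant graph built from a tuple $a$ violating a $\{-\}\cup\Aut(G)$-invariant relation $R$, but its argument ultimately comes down to two of those constants, so your setup is adequate. The gap is in the dichotomy you propose for the final case analysis.

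Your case~(a) cannot arise. A canonical identity/$-$ signature on arbitrarily large Ramsey copies only tells you what $e$ \emph{generates}; it says nothing about what generates $e$. Even the signature ``all~$-$'' would only show that $e$ interpolates $-$ modulo automorphisms, not that $e\in\langle\{-\}\cup\Aut(G)\rangle$. No signature can trap $e$ inside that monoid, so this alternative is vacuous. The paper's actual dichotomy is: either some stage of the analysis produces $\sw$, or it produces $e_N$ (or $e_E$), contradicting the \emph{hypothesis on~$\Gamma$} rather than the choice of $e$.

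The step you are missing, and which carries the case analysis, is the following observation (the fifth bullet in the paper's proof): if on arbitrarily large $2$-partitioned subgraphs $e$ behaves like the identity on one part and like $-$ on the other, then $e$ together with $-$ generates $e_N$. (Place the two endpoints of an edge you wish to delete alone in the $-$-part and the rest of your finite graph in the identity-part; that single edge is flipped and everything else is preserved, possibly after composing with $-$ to normalize the behavior between the parts.) This kills all mixed signatures and forces the behavior on and between the proper parts to be uniformly the identity, after replacing $e$ by $-e$ if necessary. The edges between the constants $x,y$ and the proper parts are then handled exactly as in the proof of Theorem~\ref{thm:endos-weak}, leaving $e$ acting as the identity on the whole copy except for deleting the single edge $\{x,y\}$; iterating gives $e_N$, the final contradiction. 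Thus your case~(b), producing $\sw$, occurs only via the specific pattern ``identity on the parts, $-$ between them''; every remaining signature is eliminated by generating $e_N$ or $e_E$, never by showing that $e$ lies in a group.
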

\begin{proof}

Suppose the endomorphisms of $\Gamma$ are not generated by $\{ -\}
\cup \text{Aut}(G)$. Then, by Proposition~\ref{prop:loc-gen}, there
is a relation $R$ invariant under $\{ -\} \cup \text{Aut}(G)$ and an
endomorphism $e$ of $\Gamma$ which violates $R$; that is, there
exists a tuple $a:=(a_1,\ldots,a_n)\in R$ such that
$e(a)=(e(a_1),\ldots,e(a_n))\nin R$.

Since $R$ is definable in the random graph, $e$ violates either an
edge or a non-edge. Hence, as in the proof of
Theorem~\ref{thm:endos-weak}, the assumption that $e$ does not
generate $e_N$, $e_E$, or a constant operation implies that $e$ is
injective.

Let $\F=(F;D,f_1,\ldots,f_n)$ be any finite $n$-constant graph. By
the $n$-constant interpolation lemma
(Lemma~\ref{lem:n-constant-interpolation}), there is a copy $\F'$ of
$\F$ in the $\aleph_0$-universal $n$-constant graph
$\Q:=(V;E,a_1,\ldots,a_n)$ such that $e$ is canonical on this copy.

We now make a series of observations on the behavior of $e$ on and
between subsets of $V$ where it is canonical.

\begin{itemize}

\item Since by assumption, $e$ does not
interpolate $e_E$, $e_N$, or a constant operation modulo
automorphisms, it behaves like $-$ or the identity on sufficiently
large finite subgraphs of $G$ where it is canonical.

\item Suppose that for arbitrarily large finite $2$-partitioned subgraphs of
$G$, $e$ behaves like the identity on the parts and like $-$ between
the parts. Then $e$ generates $\sw$.

\item Suppose that for arbitrarily large finite $2$-partitioned subgraphs of
$G$, $e$ behaves like the identity on the parts and like $e_N$ (like
$e_E$) between the parts. Then $e$ generates $e_N$ ($e_E$).

\item Suppose that for arbitrarily large finite $2$-partitioned subgraphs of
$G$, $e$ behaves like $-$ on the parts and like the identity / $e_N$
/ $e_E$ between the parts. Then $e$ and $-$ together generate $\sw$
/ $e_E$ / $e_N$. This is because we can apply the preceding two
observations to $-e$.

\item Suppose that for arbitrarily large finite $2$-partitioned subgraphs of
$G$ which $e$ is canonical on, $e$ behaves like $-$ on one part and
like the identity on the other part. Then $e$ and $-$ together
generate $e_N$.

\end{itemize}

To see the last assertion for the case where $e$ behaves like the
identity between the parts, select an edge within one of the parts
that is mapped to a non-edge. For arbitrary finite $A \subseteq V$
we can now use the operation $e$ to get rid of one edge in the graph
induced by $A$ in $G$ and preserve all other edges, and so
eventually generate an operation that behaves like $e_N$ on $A$. For
the case where $e$ behaves like $-$ between the parts, we can apply
the same argument to $-e$. If $e$ behaves like $e_N$ between the
parts, then we can all the more delete edges. If it behaves like
$e_E$ between the parts, then $-e$ behaves like $e_N$ and we are
back in the preceding case.

Summarizing our observations, we can assume that for an arbitrary
finite $n$-constant graph $\F$ there is a copy of $\F$ in $\Q$ such
that $e$ behaves like the identity on and between all proper parts
$F_i',F_j'$ of the corresponding partitioned graph, or like $-$ on
and between all of its parts. If only the second case holds for
arbitrarily large $n$-constant graphs $\F$, then we simply proceed
our argument with $-e$ instead of $e$. We can do that since also
$-e(a)\nin R$: For otherwise, picking an automorphism $\alpha$ of
$G$ such that $\alpha (-(-x))=x$ for all $x\in V$, we would have
$\alpha(-(-e(a)))=e(a)\in R$, contrary to our choice of $a$. Thus we
assume that for arbitrary finite $n$-constant graphs $\F$ there is a
copy of $\F$ in $\Q$ such that $e$ behaves like the identity on and
between all proper parts of that copy.

As in the proof of Theorem~\ref{thm:endos-weak}, we may assume that
if a copy $\F'=(F';E,a_1,\ldots,a_n)$ of $\F$ in $\Q$ is large
enough and $e$ is canonical on $\F'$ and behaves like the identity
on and between all proper parts $F_i',F_j'$ of the corresponding
$n$-partitioned graph $\tilde{\F'}$, then it leaves the edges and
non-edges between the $a_i$ and the vertices in
$F'\setminus\{a_1,\ldots,a_n\}$ unaltered. It follows that for
arbitrary finite $n$-constant graphs $\F$ there is a copy of $\F$ in
$\Q$ such that the only edges or non-edges changed by $e$ on this
copy are those between the $a_i$.

Finally, note that since $R$ is definable in the random graph and
$e(a)\nin R$, $e$ destroys at least one edge or one non-edge on
$\{a_1,\ldots,a_n\}$. Without loss of generality, say that $a_1,
a_2$ are adjacent but their values under $e$ are not. We have shown
that for arbitrarily large $2$-constant graphs $\H$, there is a copy
of $\H$ in $(V;E,a_1,a_2)$ such that $e$ behaves like the identity
on this copy, except for the edge between $a_1$ and $a_2$, which is
destroyed. This clearly implies that $e$ generates $e_N$.
\end{proof}

\begin{proposition}\label{prop:above-switch}
Let $\Gamma$ be first-order definable in the random graph, and
suppose $\Gamma$ is preserved by {\it sw} but not by $e_N, e_E$, or
a constant operation. Then the endomorphisms of $\Gamma$ are locally
generated by $\{{\it sw}\} \cup \Aut(G)$, or $\Gamma$ is preserved
by $-$.
\end{proposition}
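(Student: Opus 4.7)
The plan is to mirror the proof of Proposition~\ref{prop:above-minus} very closely, with $\sw$ and the family of ``switch'' operations $i_S$ that it locally generates with $\Aut(G)$ taking over the role that $-$ played there. Assuming for contradiction that $\End(\Gamma)$ is not locally generated by $\{\sw\}\cup\Aut(G)$ and that $\Gamma$ is not preserved by $-$, I want to force one of $e_N$, $e_E$, or a constant into $\End(\Gamma)$, which would contradict the hypothesis.

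By Proposition~\ref{prop:loc-gen}, I first pick a relation $R$ invariant under $\{\sw\}\cup\Aut(G)$, together with an endomorphism $e$ of $\Gamma$ and a tuple $a=(a_1,\ldots,a_n)$ satisfying $a\in R$ but $e(a)\notin R$. Since $R$ is first-order definable in $G$, the function $e$ must alter at least one edge or non-edge on $\{a_1,\ldots,a_n\}$; combined with Lemmas~\ref{lem:eN}, \ref{lem:eE}, and \ref{lem:constant} and the hypotheses on $\Gamma$, this forces $e$ to be injective and to violate both $E$ and $N$. I then apply the $n$-constant graph interpolation lemma to the $\aleph_0$-universal $n$-constant graph $\Q:=(V;E,a_1,\ldots,a_n)$, so that for every finite $n$-constant graph $\F$ there is a copy of $\F$ in $\Q$ on which $e$ is canonical.

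The core of the argument is a case analysis, parallel to that in Proposition~\ref{prop:above-minus}, of the canonical behaviors of $e$ on the proper parts of the associated $(n+2^n)$-partitioned graph and between them. Canonical behaviors that produce $e_E$, $e_N$, or a constant will contradict the hypothesis, and behaviors that produce $-$ will contradict the second assumption. The remaining behaviors are those that can be ``absorbed'' by composing $e$ with suitable $i_S$ from the monoid generated by $\sw$: after this composition, $e$ acts like the identity on each proper part, between any two distinct proper parts, and on the edges joining the $a_i$ to the rest of the copy. I expect this to be the main source of extra work, and the principal difference with the proof of Proposition~\ref{prop:above-minus}: whereas there one could dualize a case in a single stroke by replacing $e$ with $-e$, here $\sw$ only flips edges incident to a single vertex, so the required dualizations must be assembled block by block via carefully chosen $i_S$, and one has to verify that the choices on different pairs of blocks can be made consistent.

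Once this reduction is achieved, the endgame follows the one in Proposition~\ref{prop:above-minus}. On a sufficiently large canonical copy of $\F$ in $\Q$, the only alteration $e$ performs lies on some edge or non-edge within $\{a_1,\ldots,a_n\}$, and by $\sw$-invariance of $R$ this alteration must take the induced subgraph on the $a_i$ outside its switching class. Picking a pair $a_i,a_j$ whose $E/N$-status is altered, the same iteration with shifting automorphisms as in the earlier proof produces an operation which deletes (or, dually, adds) a single chosen edge of an arbitrary finite subgraph of $G$ while fixing everything else, and hence generates $e_N$ (respectively $e_E$) --- the desired contradiction.
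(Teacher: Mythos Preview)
Your proposal is correct and follows essentially the same approach as the paper. The consistency issue you flag is resolved there via a short $3$-partitioned graph analysis: if $e$ behaves like $-$ between exactly one of the three pairs of parts (and like the identity elsewhere), then $e$ already generates $e_N$ and $e_E$ directly (view the two endpoints of a target edge as singleton parts and the rest as the third part), while the remaining non-trivial patterns on three parts reduce to this one or to the all-identity pattern after a single switch $i_A$, so the block-by-block corrections are automatically compatible.
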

\begin{proof}

The proof is very similar to the proof of the preceding proposition.
 This time we know that unless the endomorphisms
are locally generated by $\{{\it sw}\} \cup \Aut(G)$, there exists
an endomorphism $e$ that violates a relation $R$ which is preserved
by $\{{\it sw}\} \cup \Aut(G)$. Fix a tuple $a$ as before.

As in the preceding proof, we may assume that $e$ is injective. If
$e$ interpolates $-$ modulo automorphisms, we are done. Suppose
therefore that if $e$ is canonical on a finite partitioned graph
large enough, then it must behave like the identity on its parts.

If $e$ behaves like $e_N$ ($e_E$) between the parts of arbitrarily
large finite $2$-partitioned subgraphs of $G$, then it generates
$e_N$ ($e_E$). Thus we may assume that it behaves like the identity
or $-$ between such parts.

Suppose that for arbitrarily large finite $3$-partitioned subgraphs
$\F=(F;E,F_1,F_2,F_3)$ of $G$ which $e$ is canonical on, $e$ behaves
like the $-$ between exactly two of the parts, say between
$F_1,F_2$, and like the identity between $F_2,F_3$ and $F_1,F_3$.
Then $e$ is easily seen to generate both $e_N$ and $e_E$. Indeed, if
we want to delete\footnote{for the purposes of the proof, we
identify ourselves with the personalized endomorphism monoid} any
edge from a finite graph, then we can view the vertices of the edge
as two parts of a $3$-partitioned graph, where the third part
contains all the other vertices. If $e$ behaves like $-$ between the
two vertices whose edge we want to delete, and like the identity on
and between the other parts, what happens is exactly that the edge
is deleted.

If for arbitrarily large finite $3$-partitioned subgraphs $\F$ of
$G$ which $e$ is canonical on, $e$ behaves like $-$ between, say,
$F_1,F_2$ and $F_1,F_3$, and like the identity between $F_2,F_3$,
then by applying a suitable switch operation $i_A$ to $e$ we are
back in the preceding case. Note here that there is an automorphism
$\alpha$ of $G$ such that $i_A(\alpha(i_A(x)))=x$ for all $x\in V$.
Therefore, $i_A(e(a))\nin R$; for otherwise, we would have
$i_A(\alpha(i_A(e(a)))=e(a)\in R$, a contradiction.

The latter argument works also if $e$ behaves like $-$ between all
three parts. Summarizing, we may assume that if $e$ is canonical on
a finite $n$-partitioned graph which is large enough, where $n\geq
3$, then it behaves like the identity on and between all of the
parts.

As for $n$-constant graphs which $e$ is canonical on, $e$ might flip
edges and non-edges between some parts and the constants. However,
this situation can easily be repaired by a single application of
$\sw$.

Finally, observe that at least one edge or one non-edge on
$a_1,\ldots,a_n$ is destroyed, and that we therefore can generate
either $e_N$ or $e_E$.
\end{proof}

\begin{proposition}\label{prop:to-all-permutations}
Let $\Gamma$ be first-order definable in the random graph, and
suppose $\Gamma$ is preserved by $\sw$ and by $-$, but not by $e_N,
e_E$, or a constant operation. Then the endomorphisms of $\Gamma$
are locally generated by $\{-,{\it sw}\} \cup \Aut(G)$, or $\Gamma$
is preserved by all permutations.
\end{proposition}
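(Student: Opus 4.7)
The plan is to follow the strategy of Propositions~\ref{prop:above-minus} and~\ref{prop:above-switch}. Suppose for contradiction that the endomorphisms of $\Gamma$ are not locally generated by $\{-,\sw\}\cup\Aut(G)$. By Proposition~\ref{prop:loc-gen} there exist a relation $R$, first-order definable in $G$ and preserved by $\{-,\sw\}\cup\Aut(G)$, together with an endomorphism $e$ of $\Gamma$ and a tuple $a=(a_1,\ldots,a_n)\in R$ with $e(a)\notin R$. Exactly as in the proofs of the previous two propositions, invoking Lemmas~\ref{lem:eN} and~\ref{lem:constant} and Corollary~\ref{lem:eE} together with the hypothesis that none of $e_N$, $e_E$, or a constant operation is an endomorphism of $\Gamma$, we may assume that $e$ is injective and violates both $E$ and $N$.

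Next, we apply the $n$-constant interpolation lemma (Lemma~\ref{lem:n-constant-interpolation}) to the $\aleph_0$-universal $n$-constant graph $\Q:=(V;E,a_1,\ldots,a_n)$, so that every finite $n$-constant graph has a copy in $\Q$ on which $e$ is canonical. The Ramsey-style reductions used in Proposition~\ref{prop:above-switch} apply verbatim: on sufficiently large canonical subgraphs $e$ behaves like the identity or like $-$, and between parts of sufficiently large canonical $2$-partitioned subgraphs $e$ also behaves like the identity or like $-$ (otherwise $e_E$ or $e_N$ would be generated, contrary to hypothesis). Compared to Proposition~\ref{prop:above-switch} we now have both $-$ and every $i_A$ available for left-composition; using the invariance of $R$ under $\{-,\sw\}\cup\Aut(G)$ together with the observation made in the proof of Proposition~\ref{prop:above-switch} that $i_A\circ e$ still violates $R$, we normalize the canonical behavior of $e$ on arbitrarily large partitioned subgraphs.

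The crux is the analysis of canonical behavior on $3$-partitioned subgraphs $(F;E,F_1,F_2,F_3)$. Such behavior is encoded by a sign in $\{+,-\}$ for each of the three parts (identity vs.\ $-$) and for each of the three pairs of parts. The closed group locally generated by $\{-,\sw\}\cup\Aut(G)$ acts on these sign patterns, and the normalization step above lets us assume the pattern of $e$ is a fixed representative of its orbit. If this representative is trivial (all $+$, i.e., $e$ is the identity on and between all parts of arbitrarily large canonical $3$-partitioned subgraphs), then, repeating the $n$-constant analysis of Proposition~\ref{prop:above-switch}, we find copies of any $n$-constant graph in $\Q$ on which $e$ leaves all adjacencies on $\{a_1,\ldots,a_n\}$ unaltered, contradicting $e(a)\notin R$.

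In the remaining nontrivial case we would use the sign pattern to construct, for any finite $A\subseteq V$, an operation in the monoid generated by $\{e,-,\sw\}\cup\Aut(G)$ that acts as a transposition of two prescribed vertices of $A$ and leaves all other adjacencies among $A$ unchanged; the idea is to place the two vertices in singleton parts of a $3$-partitioned subgraph containing $A$ and to exploit the nontrivial sign to flip exactly the adjacencies needed for the swap. Since every finite permutation is a product of transpositions and since the monoid is locally closed, every permutation of $V$ would then be an endomorphism of $\Gamma$, so $\Gamma$ would be preserved by all permutations. The main obstacle is precisely this last step: a careful enumeration of the orbits of the $\langle -,\sw\rangle$-action on $3$-partitioned sign patterns and a verification that each nontrivial orbit representative indeed realizes a transposition, to the exclusion of all other effects, on some canonical copy.
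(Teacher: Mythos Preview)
Your setup (the reduction to an injective $e$ violating some $R$ preserved by $\{-,\sw\}\cup\Aut(G)$ on a tuple $a$, and the appeal to Lemma~\ref{lem:n-constant-interpolation}) is correct and follows the paper. The endgame, however, goes wrong in both branches.

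\textbf{The ``trivial'' branch is misread.} Suppose, after normalizing with $-$ and the $i_A$'s, that $e$ behaves like the identity on and between all proper parts of large canonical $n$-constant subgraphs of $\Q=(V;E,a_1,\ldots,a_n)$, and (after further applications of $\sw$, exactly as in Proposition~\ref{prop:above-switch}) also like the identity between each $a_i$ and every proper part. This says nothing about the adjacencies \emph{among the constants} $a_1,\ldots,a_n$: those are fixed vertices, and what $e$ does to the pair $\{a_i,a_j\}$ is not governed by the canonical pattern on the parts. So you cannot conclude that $e$ leaves the graph on $\{a_1,\ldots,a_n\}$ unaltered; on the contrary, since $R$ is definable in $G$ and $e(a)\notin R$, $e$ must flip at least one edge or non-edge between some $a_i,a_j$. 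Exactly as in Propositions~\ref{prop:above-minus} and~\ref{prop:above-switch}, this single flip, together with identity behaviour everywhere else on the copy, lets one delete (or insert) one edge at a time, hence generates $e_N$ or $e_E$---contradicting the hypothesis. That is the contradiction, not a clash with $e(a)\notin R$.

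\textbf{The ``nontrivial'' branch is a detour.} With both $-$ and every $i_A$ available, every canonical sign pattern on a large $3$-partitioned graph is either normalized to the trivial one (compose with $-$ to fix a global flip; compose with a suitable $i_A$ to fix ``$-$'' between two or three pairs of parts, as in Proposition~\ref{prop:above-switch}) or already forces $e_N$ or $e_E$ (mixed behaviour on parts, or $e_N$/$e_E$ between parts, or ``$-$'' between exactly one pair of three parts). So there is no residual nontrivial orbit representative to analyse, and no need to manufacture transpositions. Indeed, a canonical operation cannot realise a transposition of two specific vertices: canonicity records only the graph relations between parts, not which vertex goes where, so the plan you sketch cannot be carried out as stated.

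In short, the paper's intended argument (``as in the preceding two propositions'') always terminates by generating $e_N$ or $e_E$ and hence a contradiction; the ``preserved by all permutations'' disjunct in the statement is never the actual exit of the proof (and under the stated hypotheses it is in fact vacuous, since preservation by all permutations entails preservation by $e_N$).
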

\begin{proof}
The argument goes as in the preceding two propositions; we leave the
details to the reader.
\end{proof}

Theorem~\ref{thm:endos} now is a direct consequence of
Theorem~\ref{thm:endos-weak}, and
Propositions~\ref{prop:above-minus}, \ref{prop:above-switch},
\ref{prop:to-all-permutations}: If a reduct $\Gamma$ of $G$ does not
have $e_E$, $e_N$, or a constant operation as an endomorphism, and
if its endomorphisms are not generated by the automorphisms of $G$,
then Theorem~\ref{thm:endos-weak} implies that it has either $-$ or
$\sw$ as an endomorphism. Since $\Aut(\Gamma)$ contains $\Aut(G)$,
once $\Gamma$ has $-$ or $\sw$ as an endomorphism, it also has its
inverse as an endomorphism; thus it has $-$ or $\sw$ as an
automorphism. But then by the preceding three propositions, either
$\End(\Gamma)$ is generated by $\Aut(\Gamma)$, or $\Gamma$ is
preserved by all permutations. The latter case, however, is
impossible, as this would imply that $e_E$ and $e_N$ are among its
endomorphisms, which we excluded already.

Observe also how Thomas' classification of closed permutation groups
containing $\Aut(G)$ (Theorem~\ref{thm:reducts}) follows from our
results: If a group properly contains $\Aut(G)$, then it contains
$-$ or $\sw$, by Theorem~\ref{thm:endos-weak}. If it contains $-$
but is not generated by $-$, then it contains $\sw$ by
Proposition~\ref{prop:above-minus}. Similarly, if it contains $\sw$
but is not generated by $\sw$, then it contains $-$ by
Proposition~\ref{prop:above-switch}. If it contains both $-$ and
$\sw$, but is not generated by these operations, then it must
already contain all permutations
(Proposition~\ref{prop:to-all-permutations}).


\end{document}